\documentclass[11pt]{article}

\usepackage[T1]{fontenc}
\usepackage{lmodern}
\usepackage{microtype}
\usepackage{amsmath,amssymb,amsthm,mathtools}
\usepackage{bm}
\usepackage[a4paper,margin=30mm]{geometry}
\usepackage{enumitem}
\usepackage{xcolor}
\usepackage[colorlinks=true,linkcolor=blue!55!black,citecolor=blue!55!black,urlcolor=blue!55!black]{hyperref}

\hypersetup{
  pdftitle={Domain dependence of Sobolev--curl constants and structure of critical curl--curl solutions},
  pdfauthor={Yong Liu, Jun Wang, Xuan Wang, Wen Yang},
  pdfsubject={Critical curl--curl equations and intrinsic Sobolev--curl constants},
  pdfkeywords={Sobolev--curl inequality, thin domain, critical curl--curl equation, Newtonian potential, Maxwell boundary condition}
}

\allowdisplaybreaks
\setlist{itemsep=2pt,topsep=4pt}
\numberwithin{equation}{section}

\newtheorem{maintheorem}{Theorem}
\newtheorem{proposition}{Proposition}[section]
\newtheorem{lemma}[proposition]{Lemma}

\theoremstyle{definition}

\theoremstyle{remark}
\newtheorem{remark}[proposition]{Remark}

\newcommand{\R}{\mathbb{R}}
\newcommand{\G}{\mathcal{G}}
\newcommand{\dive}{\operatorname{div}}

\newcommand{\eps}{\varepsilon}
\newcommand{\dd}{\,\mathrm{d}}
\newcommand{\norm}[1]{\lVert #1\rVert}
\newcommand{\abs}[1]{\lvert #1\rvert}
\newcommand{\curl}{\operatorname{curl}}
\newcommand{\Scurl}{S_{\mathrm{curl}}}
\newcommand{\Sint}{S_{\mathrm{curl}}^{\mathrm{int}}}
\newcommand{\Sext}{S_{\mathrm{curl}}^{\mathrm{ext}}}
\newcommand{\Xzero}{X_{2,0}}
\newcommand{\Xzerop}{X_{p,0}}
\newcommand{\Wker}{\mathcal{W}}

\newcommand{\Needspace}[1]{%
  \par\begingroup
  \dimen0=#1\relax
  \dimen2=\pagegoal
  \advance\dimen2 by -\pagetotal
  \ifdim\dimen0>\dimen2\newpage\fi
  \endgroup
}

\title{Domain dependence of Sobolev--curl constants\\
and structure of critical curl--curl solutions}
\author{%
Yong Liu\thanks{Yong Liu, School of Mathematics and Statistics, Beijing Technology and Business University, Beijing, China. \textit{Email:} \href{mailto:yliumath@btbu.edu.cn}{\nolinkurl{yliumath@btbu.edu.cn}}.},\ \
Jun Wang\thanks{Jun Wang, School of Mathematical Sciences, Jiangsu University, Zhenjiang, Jiangsu, 212013, P.R. China. \textit{Email:} \href{mailto:wangmath2011@126.com}{\nolinkurl{wangmath2011@126.com}}.},\ \ 
Xuan Wang\thanks{Xuan Wang, Department of Mathematics, Faculty of Science and Technology, University of Macau, Macau, P.R. China. \textit{Email:} \href{mailto:wangxuanolivia@126.com}{\nolinkurl{wangxuanolivia@126.com}}.},\ \
Wen Yang\thanks{Wen Yang, Department of Mathematics, Faculty of Science and Technology, University of Macau, Macau, P.R. China. \textit{Email:} \href{mailto:wyang@um.edu.mo}{\nolinkurl{wyang@um.edu.mo}}.}%
}
\date{}

\begin{document}

\maketitle

\begin{abstract}
This paper investigates intrinsic critical Sobolev--curl constants on
bounded domains and the structure of finite-energy solutions to the
critical curl--curl equation in $\R^3$. First, for $1<p<3$, we prove that
the intrinsic critical $p$-curl constant on $(0,a)\times D$ is bounded
above by $a^{p/3}$ times a Sobolev quotient on $D$. The proof uses an
aligned test field whose optimal curl-free correction vanishes.
Consequently, these constants tend to zero along thin convex boxes.
For $p=2$, we construct explicit convex and nonconvex domains with
intrinsic constant below the sharp scalar Sobolev constant. The
nonconvex example gives an affirmative answer to the domain-existence
question in Problem~5 of Mederski and Szulkin(Arch. Ration. Mech. Anal. 241 (2021), 1815-1842), while the convex
example disproves the lower bound stated there for convex domains.
Concerning their Problem~2, we obtain an implicit Newtonian
representation of finite-energy solutions and exclude two special forms
of solutions.
These structural results do not yet provide an explicit ground state
or an unconditional ground-state decay rate as asked in Problem~2.
\end{abstract}

\medskip
\noindent\textbf{2020 Mathematics Subject Classification.}
35J20, 35Q61, 35B33, 35B40, 35J92, 46E35.

\smallskip
\noindent\textbf{Keywords.}
Sobolev--curl inequality, thin domain, critical curl--curl equation,
Newtonian potential, Maxwell boundary condition.

\section{Introduction}\label{sec:introduction}

The study of critical elliptic equations is closely related to sharp
Sobolev inequalities. In dimension three, a basic example is
\begin{equation}\label{eq:scalar-critical-intro}
 -\Delta U=\abs{U}^{4}U
 \qquad\text{in }\R^3.
\end{equation}
This equation is associated with the sharp Sobolev inequality
\[
 S\left(\int_{\R^3}\abs{U}^{6}\dd x\right)^{1/3}
 \leq \int_{\R^3}\abs{\nabla U}^{2}\dd x.
\]
Aubin \cite{Aubin1976} and Talenti \cite{Talenti1976} determined the
optimal constant and its extremals. The classification of positive
entire solutions is closely related to the symmetry theorem of
Caffarelli, Gidas and Spruck \cite{CaffarelliGidasSpruck1989}.
In particular, the positive solutions of \eqref{eq:scalar-critical-intro}
are the Aubin--Talenti profiles, up to translations and dilations.
These symmetries also account for the loss of compactness at the
critical exponent. Lions \cite{Lions1985} developed the
concentration--compactness principle in the limit case to analyze
this phenomenon, including its role in sharp functional inequalities.

On bounded domains, the critical exponent remains a source of
noncompactness even though escape by translation is no longer possible.
The work of Brezis and Nirenberg \cite{BrezisNirenberg1983} on critical
Dirichlet problems shows how a lower-order perturbation can restore
existence in appropriate parameter ranges. In this setting, the sharp
Sobolev constant supplies the natural comparison level for variational
arguments. For vector equations involving the curl operator, the
analogous questions also depend on the kernel of the operator and
the boundary condition imposed on the field.

In this paper, we consider the critical curl--curl equation
\begin{equation}\label{eq:main}
 \curl(\curl u)=\abs{u}^{4}u
 \qquad\text{in }\R^3,
\end{equation}
together with the associated Sobolev--curl constants on bounded
domains. Here $\curl u=\nabla\times u$ and $\dive u=\nabla\cdot u$.
All derivatives are understood in the distributional sense when
necessary. Equation~\eqref{eq:main} arises in the study of nonlinear
Maxwell equations and has the same critical scaling as
\eqref{eq:scalar-critical-intro}. By the identity
\[
 \curl(\curl u)=\nabla(\dive u)-\Delta u,
\]
the curl--curl operator agrees with $-\Delta$ on divergence-free
fields. However, every gradient field belongs to its kernel.
The resulting infinite-dimensional kernel is a main difficulty
in extending scalar variational arguments to \eqref{eq:main}.

More generally, time-harmonic models for nonlinear electromagnetic
media lead to equations of the form
\[
 \curl\bigl(\mu(x)^{-1}\curl u\bigr)
 -\omega^2\varepsilon(x)u=f(x,u),
\]
where $\mu$ and $\varepsilon$ describe the magnetic permeability and
linear permittivity, and $f$ accounts for the nonlinear polarization.
For the modeling assumptions and variational formulation, we refer to
Bartsch and Mederski \cite{BartschMederski2017}. Equation~\eqref{eq:main}
is the homogeneous critical model with no linear zeroth-order term.
Its analysis retains both the degeneracy of the curl--curl operator
and the loss of compactness associated with critical growth.

An early variational treatment of the degeneracy was given by
Benci and Fortunato \cite{BenciFortunato2004}, using a decomposition
into solenoidal and gradient components. For bounded domains,
Bartsch and Mederski \cite{BartschMederski2015} obtained ground and
bound state results for semilinear time-harmonic Maxwell equations.
Mederski \cite{Mederski2018} subsequently studied the
Brezis--Nirenberg problem for the curl--curl operator with the
perfect-conductor boundary condition $\nu\times u=0$, obtaining
critical-growth results in a cylindrically symmetric setting.

In the whole space, Mederski \cite{Mederski2015} proved ground-state
existence for a class of semilinear Maxwell equations with vanishing
permittivity. Bartsch, Dohnal, Plum and Reichel
\cite{BartschDohnalPlumReichel2016} studied ground states in
cylindrically symmetric media. Without imposing this symmetry,
Mederski, Schino and Szulkin \cite{MederskiSchinoSzulkin2020}
established existence and multiplicity results under suitable
growth assumptions formulated in Orlicz spaces. The generalized
Nehari approach and the difficulties caused by the infinite-dimensional
kernel are discussed further in \cite{MederskiSchino2022}.

For the purely critical equation
\eqref{eq:main}, Mederski and Szulkin \cite{MederskiSzulkin2021}
established a sharp Sobolev--curl inequality and proved that its
optimal constant is attained. After a curl-free correction and an
amplitude normalization, an optimizer gives a ground state of
\eqref{eq:main}. The corresponding critical $p$-curl inequalities
are studied in \cite{MederskiSzulkin2025}.

Boundary conditions are particularly important when passing from
the whole space to a bounded domain. The condition $\nu\times u=0$
prescribes only the tangential component and is weaker than requiring
every component to have zero trace. Vector potentials and decompositions
on nonsmooth domains are treated by Amrouche, Bernardi, Dauge and
Girault \cite{AmroucheBernardiDaugeGirault1998}. We also refer to
Monk \cite{Monk2003} for the Maxwell function spaces and traces.
These distinctions matter here because the admissible curl-free
corrections enter the definition of the optimal constant itself.

For a bounded domain $\Omega$, the choice of curl-free corrections
leads to two different constants. In the intrinsic constant
$\Sint(\Omega)$, the correction satisfies the Maxwell boundary
condition on $\Omega$. In the constant $\Sext(\Omega)$, the field
is extended by zero and the correction is taken in the whole space.
Their precise definitions are given in
Sections~\ref{sec:main-results} and~\ref{sec:examples}. For $p=2$,
our notation agrees with that of \cite{MederskiSzulkin2021} as follows:
\[
 \Sint(\Omega)=\overline S_{\mathrm{curl}}(\Omega),
 \qquad
 \Sext(\Omega)=S_{\mathrm{curl}}(\Omega)
 \quad\text{in \cite{MederskiSzulkin2021}}.
\]
Mederski and Szulkin proved that $\Sext(\Omega)=\Scurl>S$, where
$S$ is the sharp scalar Sobolev constant. They also stated the
bound $\overline S_{\mathrm{curl}}(\Omega)\geq S$ for bounded
convex domains or domains with $C^{1,1}$ boundary.

Two questions in \cite[Section~7]{MederskiSzulkin2021} motivate
the present work. We restate them below, using the numbering in that paper.
\begin{enumerate}[label={},leftmargin=0pt,itemindent=0pt,
                  listparindent=\parindent,itemsep=8pt]
\item \textbf{Problem 2.}
Find an explicit formula for a ground state of \eqref{eq:main},
or determine its decay at infinity. In particular, how does this
decay compare with that of the scalar Aubin--Talenti solutions?
\item \textbf{Problem 5.}
Determine whether the stated comparison
\[
 \Scurl\geq\overline S_{\mathrm{curl}}(\Omega)\geq S
\]
can be improved. Also, determine whether some bounded Lipschitz
domain $\Omega$, neither convex nor of class $C^{1,1}$, satisfies
\[
 \overline S_{\mathrm{curl}}(\Omega)<S.
\]
\end{enumerate}

Theorem~\ref{thm:examples-extension} answers the existence question
in Problem~5 by constructing a nonconvex Lipschitz domain with
$\Sint(\Omega)<S$ and boundary not of class $C^{1,1}$.
For Problem~2, Theorem~\ref{thm:main-conclusions} provides an
implicit reconstruction.
An explicit ground state and its general decay rate remain undetermined.

To answer the existence question in Problem~5, we first study the dependence of the
intrinsic constant on the geometry of the domain. For a fixed
bounded Lipschitz set $D\subset\R^2$, we show that the intrinsic
critical $p$-curl constant on $\Omega_a=(0,a)\times D$ is bounded
above by a multiple of $a^{p/3}$ for every $1<p<3$. In particular,
it tends to zero along thin convex boxes. For $p=2$, we give a
convex box and a nonconvex prism whose constants are strictly
smaller than $S$. The convex box also shows that the lower bound
stated in \cite{MederskiSzulkin2021} for
convex domains does not hold for the intrinsic constant.

To obtain these estimates, we use fields of the form
$U_f(s,y)=f(y)e_1$. The main point is to prove that their optimal
intrinsic curl-free correction is zero. For this purpose, we
first represent the admissible curl-free fields as gradients of
zero-trace potentials. We then verify the variational identity
for $U_f$ by integration in the axial variable. The resulting
quotient contains the factor $a^{p/3}$. This construction also
explains the obstruction to applying the sharp scalar Sobolev
inequality on $\Omega$: the Maxwell condition prescribes the
tangential trace, whereas the scalar zero-extension argument
requires zero trace of every component. We discuss this point
in Remark~\ref{rem:published-bound}. The whole-space identity
$\Sext(\Omega)=\Scurl>S$ remains valid.

We next study the structure of finite-energy solutions of
\eqref{eq:main}. Taking the divergence gives
\[
 \dive(\abs{u}^{4}u)=0.
\]
Thus the equation imposes a constraint on the nonlinear source,
rather than a divergence-free condition on $u$. Using the Helmholtz
decomposition and minimization along curl-free directions, we
express the solenoidal component as a Newtonian potential and
recover the remaining component through a unique nonlinear
correction.

We also exclude constant polarizations of Aubin--Talenti functions
and nonzero constant-coefficient Beltrami fields. The decay of general
ground states remains open in this analysis.

The paper is organized as follows. In Section~\ref{sec:main-results},
we introduce the functional setting and state the main results.
Section~\ref{sec:proof-one} is devoted to the proofs of
Theorems~\ref{thm:main-intro} and~\ref{thm:examples-extension}.
In Section~\ref{sec:proof-two}, we prove
Theorem~\ref{thm:main-conclusions}.

\section{Main results}\label{sec:main-results}

Let $\Omega\subset\R^3$ be a nonempty open set, let \(1<p<3\), and set
\[
 p^*=\frac{3p}{3-p}.
\]
Define
\[
 W^{p^*,p}(\curl;\Omega)
 :=\left\{u\in L^{p^*}(\Omega;\R^3):
             \curl u\in L^p(\Omega;\R^3)\right\},
\]
with norm
\[
 \|u\|_{W^{p^*,p}(\curl;\Omega)}
 :=\|u\|_{L^{p^*}(\Omega)}+\|\curl u\|_{L^p(\Omega)}.
\]
Set
\[
\begin{aligned}
 \Xzerop(\Omega):=\Bigl\{u\in W^{p^*,p}(\curl;\Omega):\;&
 \text{there exists }\{u_n\}\subset C_c^\infty(\Omega;\R^3)
 \text{ such that}\\[-1mm]
 &u_n\to u\text{ in }L^{p^*}(\Omega;\R^3),
 \quad \curl u_n\to\curl u\text{ in }L^p(\Omega;\R^3)
 \Bigr\},
\end{aligned}
\]
and
\[
 \qquad
 \Wker_{p,\Omega}:=
 \{w\in\Xzerop(\Omega):\curl w=0\}.
\]
The intrinsic critical \(p\)-curl constant is
\begin{equation}\label{eq:p-constant}
 S_{p,\mathrm{curl}}^{\mathrm{int}}(\Omega)
 :=\inf_{\substack{u\in\Xzerop(\Omega)\\\curl u\neq0}}
 \frac{\displaystyle\int_\Omega|\curl u|^p\,dx}
 {\left(
   \displaystyle\inf_{w\in\Wker_{p,\Omega}}
   \int_\Omega|u+w|^{p^*}\,dx
  \right)^{p/p^*}}.
\end{equation}

For a bounded Lipschitz domain $D\subset\R^2$, define
\begin{equation*}
 \Lambda_{p,p^*}(D):=
 \inf_{0\ne f\in W_0^{1,p}(D)}
 \frac{\displaystyle\int_D|\nabla f|^p\,dy}
 {\left(\displaystyle\int_D|f|^{p^*}\,dy\right)^{p/p^*}}.
\end{equation*}
This constant is finite and positive by the two-dimensional Sobolev
embedding $W_0^{1,p}(D)\hookrightarrow L^{p^*}(D)$.

\Needspace{7\baselineskip}
We first give an upper bound for the intrinsic constant on product
domains. As a consequence, there is no positive lower bound which
is uniform over all bounded convex Lipschitz domains.
\begin{maintheorem}\label{thm:main-intro}
Let $1<p<3$, $p^*=3p/(3-p)$, and let $D\subset\R^2$ be a fixed
bounded Lipschitz domain. Set $Q=(0,1)^2$ and
$\Omega_a=(0,a)\times D\subset\R^3$ for $a>0$.
The following assertions hold.
\begin{enumerate}[label=\textup{(\roman*)},leftmargin=2.2em]
\item \(\displaystyle
 S_{p,\mathrm{curl}}^{\mathrm{int}}(\Omega_a)
 \leq a^{p/3}\Lambda_{p,p^*}(D).
\)
\item \(\displaystyle
 \lim_{a\to0}S_{p,\mathrm{curl}}^{\mathrm{int}}
       \bigl((0,a)\times Q\bigr)=0
 \quad\text{and}\quad
 \inf_{\substack{\Omega\subset\R^3\\
                  \Omega\text{ bounded, convex}\\
                  \text{and Lipschitz}}}
       S_{p,\mathrm{curl}}^{\mathrm{int}}(\Omega)=0.
\)
\end{enumerate}
\end{maintheorem}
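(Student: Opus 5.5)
The plan is to test the quotient \eqref{eq:p-constant} on the aligned field $U_f(s,y)=f(y)e_1$ on $\Omega_a$, where $s\in(0,a)$, $y\in D$ and $0\neq f\in C_c^\infty(D)$. At the end we pass to general $f\in W_0^{1,p}(D)$ by density, since both sides of the quotient are continuous in $W_0^{1,p}(D)$: the numerator in $W^{1,p}$, and the denominator in $L^{p^*}(D)$ by the two-dimensional Sobolev embedding.

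\emph{Step 1 (admissibility and numerator).} A direct computation gives $\curl U_f=(0,\partial_{y_2}f,-\partial_{y_1}f)$, so $|\curl U_f|=|\nabla f|$ and $\int_{\Omega_a}|\curl U_f|^p=a\int_D|\nabla f|^p$. To see that $U_f\in\Xzerop(\Omega_a)$, take cutoffs $\chi_n\in C_c^\infty((0,a))$ with $0\le\chi_n\le1$ and $\chi_n\to1$ pointwise. Then $\chi_nU_f\in C_c^\infty(\Omega_a;\R^3)$. Moreover $\curl(\chi_n f e_1)=\chi_n\curl(fe_1)+\chi_n'(s)\,e_1\times fe_1=\chi_n\curl U_f$, so the cutoff produces no error term in the curl. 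Dominated convergence then gives $\chi_nU_f\to U_f$ in $L^{p^*}$ and $\curl(\chi_nU_f)\to\curl U_f$ in $L^p$.

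\emph{Step 2 (the optimal correction is zero).} This is the heart of the proof and the step I expect to be the main obstacle. By convexity of $|\cdot|^{p^*}$,
\[
\int_{\Omega_a}|U_f+w|^{p^*}\ge\int_{\Omega_a}|U_f|^{p^*}+p^*\int_{\Omega_a}|f|^{p^*-2}f\,w_1\dd x ,
\]
so it suffices to show that $\int_{\Omega_a} g(y)w_1\dd x=0$ for every $w\in\Wker_{p,\Omega_a}$, where $g=|f|^{p^*-2}f$. Since $p^*>2$, $g$ is $C^1$ with compact support in $D$.

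Rather than first representing $w$ as $\nabla\varphi$ with $\varphi$ of zero trace, which needs topological care when $D$ is not simply connected, I would argue by duality. Choose a field $\Phi(y)=(0,\Phi_2(y),\Phi_3(y))$, depending only on $y$, that is $C^1$ and bounded on $\R^3$ and satisfies $\partial_{y_1}\Phi_3-\partial_{y_2}\Phi_2=g$. One choice is to extend $g$ by zero, set $\Phi_2=0$ and $\Phi_3(y)=\int_{-\infty}^{y_1}g(t,y_2)\dd t$. Then $\curl\Phi=g\,e_1$. For $w_n\in C_c^\infty(\Omega_a)$ one has $\int w_n\cdot\curl\Phi=\int\curl w_n\cdot\Phi$. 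Passing to the limit along the approximating sequence defining $\Xzerop$ is legitimate because $\Phi$ and $\curl\Phi$ are bounded and $\Omega_a$ is bounded, so they lie in the dual exponents. This yields $\int gw_1=\int\curl w\cdot\Phi=0$. Hence $\inf_w\int|U_f+w|^{p^*}=\int|U_f|^{p^*}=a\int_D|f|^{p^*}$, with the infimum attained at $w=0$.

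\emph{Step 3 (conclusion).} Combining the two steps,
\[
S_{p,\mathrm{curl}}^{\mathrm{int}}(\Omega_a)\le\frac{a\int_D|\nabla f|^p}{a^{p/p^*}\bigl(\int_D|f|^{p^*}\bigr)^{p/p^*}}.
\]
Since $1-p/p^*=1-(3-p)/3=p/3$, the prefactor is $a^{p/3}$. Taking the infimum over $f$ and using density gives (i). For (ii), apply (i) with $D=Q$: the boxes $(0,a)\times Q$ are bounded, convex and Lipschitz, and $a^{p/3}\Lambda_{p,p^*}(Q)\to0$ as $a\to0$. Together with nonnegativity of the constant, this gives both the limit and the vanishing infimum over bounded convex Lipschitz domains.
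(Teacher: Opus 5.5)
Your proof is correct. It differs from the paper's in Step~2, the vanishing of the optimal correction.

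\textbf{The paper's route.} It first proves the kernel representation $\Wker_{p,\Omega}=\nabla W_0^{1,p^*}(\Omega)$ (Proposition~\ref{thm:potential-representation}). That proof needs the distributional Poincar\'e lemma, the zero-extension characterization of $W_0^{1,p^*}$ on Lipschitz domains, and connectedness of the exterior of $(0,a)\times D$. Lemma~\ref{lem:product-exterior} shows the exterior is connected even for multiply connected $D$, so the topological difficulty you anticipated does not arise. The paper then kills $\int|U_f|^{p^*-2}U_f\cdot\nabla\varphi$ by integrating $\partial_s\varphi$ over axial slices, and concludes through the Euler characterization.

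\textbf{Your route.} You test the approximating sequence from the definition of $\Xzerop(\Omega_a)$ against an $s$-independent bounded potential $\Phi$ with $\curl\Phi=g\,e_1$. This uses only the definition of the space and the boundedness of $\Omega_a$, not the Lipschitz regularity of $D$ or any topology. It also shows directly that $\int_0^a w_1(s,y)\,ds=0$ for a.e.\ $y$ and every $w\in\Wker_{p,\Omega_a}$.

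\textbf{What each buys.} The paper's route yields a structural description of the intrinsic curl-free kernel, which makes the role of the Maxwell boundary condition and of topology explicit (cf.\ Remark~\ref{rem:topology-sharp}). Through strict convexity it also gives uniqueness of the zero correction. Your supporting-hyperplane inequality only shows that $0$ is a minimizer, but that is all the upper bound needs.

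\textbf{A small slip.} The claim $p^*>2$ holds only for $p>6/5$. For $1<p<6/5$ the function $g=|f|^{p^*-2}f$ is merely H\"older continuous, so $\Phi$ need not be $C^1$. This is harmless: only $\partial_{y_1}\Phi_3$ and $\partial_s\Phi_3$ enter $\curl\Phi$, and your limit argument uses only $g,\Phi\in L^\infty$ and $\curl\Phi=g\,e_1$ in $\mathcal D'(\Omega_a)$.
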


For Theorem~\ref{thm:examples-extension}, take $p=2$ in the preceding
definitions and abbreviate
\begin{equation}\label{eq:Sint}
 \Sint(\Omega):=S_{2,\mathrm{curl}}^{\mathrm{int}}(\Omega).
\end{equation}

The scalar comparison constant is
\[
 S=\inf_{0\neq\varphi\in C_c^\infty(\R^3)}
 \frac{\displaystyle\int_{\R^3}|\nabla\varphi|^2\,dx}
      {\left(\displaystyle\int_{\R^3}|\varphi|^6\,dx\right)^{1/3}}
 =3\left(\frac{\pi}{2}\right)^{4/3},
\]
whose sharp value and extremals are classical
\cite{Aubin1976,Talenti1976}.

\Needspace{7\baselineskip}
For $p=2$, the preceding estimate yields the following explicit
examples with intrinsic constant smaller than $S$. The nonconvex
example answers the domain-existence question in Problem~5.
\begin{maintheorem}
\label{thm:examples-extension}
Let $Q=(0,1)^2$. The two bounded Lipschitz domains
\[
 \Omega_B=(0,\tfrac14)\times Q,\qquad
 \Omega_L=(0,\tfrac14)\times D_L,
\]
where
\[
 D_L=\bigl((0,2)\times(0,1)\bigr)
      \cup\bigl((0,1)\times(0,2)\bigr),
\]
satisfy
\[
 \Sint(\Omega_B),\ \Sint(\Omega_L)
 \leq\pi^2\left(\frac{2}{25}\right)^{1/3}<S.
\]
$\Omega_B$ is convex, $\Omega_L$ is nonconvex, and
$\partial\Omega_B,\partial\Omega_L\notin C^{1,1}$.
\end{maintheorem}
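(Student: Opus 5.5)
The plan is to obtain both estimates directly from Theorem~\ref{thm:main-intro}(i) with $p=2$, $p^*=6$ and $a=\tfrac14$, so that $a^{p/3}=4^{-2/3}=16^{-1/3}$. With this choice the claimed bound $\pi^2(2/25)^{1/3}$ becomes equivalent to
\[
 \Lambda_{2,6}(D)\le \pi^2\Bigl(\tfrac{2}{25}\Bigr)^{1/3}16^{1/3}=\pi^2\Bigl(\tfrac{32}{25}\Bigr)^{1/3}
 \qquad\text{for } D=Q \text{ and } D=D_L .
\]
We therefore only need to bound the two-dimensional Sobolev quotient by a single explicit test function.

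For $D=Q$, we will test $\Lambda_{2,6}(Q)$ with $f(y)=\sin(\pi y_1)\sin(\pi y_2)\in W_0^{1,2}(Q)$. Using $\int_0^1\sin^2=\int_0^1\cos^2=\tfrac12$, we get $\int_Q|\nabla f|^2\,dy=2\pi^2\cdot\tfrac14=\tfrac{\pi^2}{2}$. Using $\int_0^1\sin^6(\pi t)\,dt=\tfrac{5}{16}$, we get $\int_Q f^6\,dy=\tfrac{25}{256}$. The quotient is then
\[
 \frac{\pi^2}{2}\Bigl(\frac{256}{25}\Bigr)^{1/3}=\pi^2\Bigl(\frac{32}{25}\Bigr)^{1/3},
\]
which is exactly the required value. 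For $D_L$, we will use $Q\subset D_L$. The zero extension of any $f\in W_0^{1,2}(Q)$ lies in $W_0^{1,2}(D_L)$ with the same Dirichlet and $L^6$ integrals, so $\Lambda_{2,6}(D_L)\le\Lambda_{2,6}(Q)$. Both domains are bounded Lipschitz, since a square and an L-shaped polygon are Lipschitz. Hence Theorem~\ref{thm:main-intro}(i) applies and gives both bounds.

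For the strict inequality with $S=3(\pi/2)^{4/3}$, it suffices to cube both sides. The condition $\pi^6\cdot\tfrac{2}{25}<27\pi^4/16$ is equivalent to $\pi^2<\tfrac{675}{32}\approx21.09$, which holds since $\pi^2<10$.

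The remaining geometric facts are elementary. $\Omega_B$ is a rectangular box, hence convex and Lipschitz. $\Omega_L$ is the product of an interval with an L-shaped polygon, so it is Lipschitz. It is nonconvex because the points $(\tfrac18,1.9,0.1)$ and $(\tfrac18,0.1,1.9)$ lie in $\Omega_L$, while their midpoint $(\tfrac18,1,1)$ has the coordinates in $D_L$ equal to $(1,1)$, which is not in the open set $D_L$. Neither boundary is $C^{1,1}$, because both contain edges where two planar faces meet at a right angle (the reentrant edge for $\Omega_L$ has angle $3\pi/2$). Near such an edge the boundary is not locally the graph of a $C^1$ function.

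I expect no genuine obstacle, since the analytic content is contained in Theorem~\ref{thm:main-intro}(i). The only delicate points are the exact evaluation of $\int_0^1\sin^6(\pi t)\,dt=\tfrac{5}{16}$, which makes the constants match exactly, and the check that the hypotheses of Theorem~\ref{thm:main-intro} (bounded Lipschitz $D$) hold for the nonconvex cross-section $D_L$.
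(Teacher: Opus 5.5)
Your proposal is correct and follows the paper's proof essentially step by step. It uses the same test function $\sin(\pi y_1)\sin(\pi y_2)$ in the bound of Theorem~\ref{thm:main-intro} with $p=2$, $a=\tfrac14$, the same zero extension from $Q$ to $D_L$, the same cubed comparison with $S$ (equivalent to the paper's $32\pi^2/675<1$), and the same edge argument showing the boundaries are not $C^{1,1}$; your explicit midpoint check for the nonconvexity of $\Omega_L$ is a small addition to what the paper states.
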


\begin{remark}
\label{rem:problem-P5}
Theorem~\ref{thm:examples-extension} settles the domain-existence
question in \cite[Section~7, Problem~5]{MederskiSzulkin2021}.
Indeed, $\Omega_L$ has all the required properties:
\[
 \Omega_L\ \text{is bounded, Lipschitz and nonconvex},\qquad
 \partial\Omega_L\notin C^{1,1},
\]
and
\[
 \overline S_{\mathrm{curl}}(\Omega_L)
 =\Sint(\Omega_L)
 \leq\pi^2\left(\frac{2}{25}\right)^{1/3}<S.
\]
For the comparison inequality also mentioned in Problem~5, the convex
example $\Omega_B$ disproves the stated lower bound, and
Theorem~\ref{thm:main-intro} excludes any positive uniform
intrinsic lower bound over bounded convex Lipschitz domains.
The domain-independent result $\Sext(\Omega)=\Scurl>S$ concerns
whole-space corrections and remains unchanged.
\end{remark}

Theorem~\ref{thm:main-intro} follows from the aligned field
$U_f(s,y)=f(y)e_1$, whose optimal intrinsic curl-free correction is zero.
The critical quotient therefore contains the factor $a^{1-p/p^*}=a^{p/3}$.
Taking $p=2$ and $a=1/4$ gives the two counterexamples in
Theorem~\ref{thm:examples-extension}. The convex example disproves the
intrinsic lower bound stated in
\cite[Theorem~1.3]{MederskiSzulkin2021}, without affecting the
domain-independent whole-space-corrected constant.

For the whole-space equation \eqref{eq:main}, take $p=2$ and
$\Omega=\R^3$ in the preceding definitions. The energy on
$W^{6,2}(\curl;\R^3)$ is
\begin{equation}\label{eq:energy}
    J(u)
    :=\frac12\int_{\R^{3}}\abs{\curl  u}^{2}\dd x
      -\frac16\int_{\R^{3}}\abs{u}^{6}\dd x,
\end{equation}
and a weak solution is a field $u\in W^{6,2}(\curl;\R^3)$ satisfying
\begin{equation}\label{eq:weak}
    \int_{\R^{3}}(\curl  u)\cdot(\curl \phi)\dd x
    =\int_{\R^{3}}\abs{u}^{4}u\cdot\phi\dd x
    \qquad
    \text{for every }\phi\in C_{c}^{\infty}(\R^{3},\R^{3}).
\end{equation}

The positive Aubin--Talenti functions in $\R^3$, normalized to solve
$-\Delta U=U^5$, are
\begin{equation}\label{eq:aubin-talenti}
 U_{\varepsilon,y}(x)
 =3^{1/4}\left(\frac{\varepsilon}{\varepsilon^2+|x-y|^2}\right)^{1/2},
 \qquad \varepsilon>0,\quad y\in\R^3.
\end{equation}
They attain the sharp scalar Sobolev constant $S$
\cite{Aubin1976,Talenti1976}.

\Needspace{7\baselineskip}
Our next result concerns finite-energy solutions of \eqref{eq:main}.
We give their Newtonian representation and exclude two special forms
of solutions.
\begin{maintheorem}\label{thm:main-conclusions}
Let $u\in W^{6,2}(\curl;\R^3)$ be a weak solution of \eqref{eq:main}. Set
\[
 F=|u|^4u,\qquad N(x)=\frac{1}{4\pi|x|}.
\]
\begin{enumerate}[label=\textup{(\roman*)},leftmargin=2.2em]
\item There are unique $v,g\in W^{6,2}(\curl;\R^3)$ such that
      \[
       u=v+g,\qquad \dive  v=0,\qquad \curl  g=0.
      \]
      Here $v=N*F$, and $g$ is the unique minimizer
      \[
       g=\operatorname*{argmin}_{\substack{h\in W^{6,2}(\curl;\R^3)\\
                          \curl  h=0}}
          \frac16\int_{\R^3}|v+h|^6\dd x.
      \]
\item A ground state cannot have the form $u(x)=aU(x)$, where
      $a\in\R^3\setminus\{0\}$ is constant and $U$ is an Aubin--Talenti
      function. No nonzero solution $u$ satisfies
      $\curl  u=\kappa u$ for a constant $\kappa\in\R$.
\end{enumerate}
\end{maintheorem}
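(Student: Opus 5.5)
For part (i) I would take the Helmholtz decomposition of $u$ in the critical space. First I define $v:=N*F$ and check that it is well defined. Since $u\in L^6$, we have $F=|u|^4u\in L^{6/5}$. Hardy--Littlewood--Sobolev then gives $v\in L^6$, and Calder\'on--Zygmund gives $\nabla v\in L^2$, hence $\curl v\in L^2$. Taking the divergence of \eqref{eq:weak}, that is, testing with $\phi=\nabla\psi$, gives $\dive F=0$ in $\mathcal D'$. Therefore $\dive v=N*\dive F=0$. Moreover $-\Delta v=F$, so $\curl\curl v=\nabla\dive v-\Delta v=F$. Set $g:=u-v\in W^{6,2}(\curl;\R^3)$. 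Then $\curl\curl g=0$ weakly.

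The next task is to show $\curl g=0$, and this is where I expect the main obstacle. I would argue that $\omega:=\curl g\in L^2$ satisfies $\curl\omega=0$ and $\dive\omega=0$. Each component of $\omega$ is then harmonic and lies in $L^2(\R^3)$, so $\omega=0$ by Liouville. The technical care lies in justifying $\curl\curl g=0$ with only $L^2$ control; I would use duality with test fields and mollification.

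Uniqueness of the splitting follows similarly. Suppose $v_1+g_1=v_2+g_2$. Then $d:=v_1-v_2=g_2-g_1$ is divergence free and curl free, so it is harmonic. Since $d\in L^6$, Liouville gives $d=0$. For the minimizing property, I use that $h\mapsto\frac16\int|v+h|^6$ is strictly convex and coercive on the weakly closed subspace $\{h\in L^6:\curl h=0\}$. Its Euler--Lagrange condition reads $\int|v+g|^4(v+g)\cdot h=0$ for all curl-free $h$, i.e.\ $\int F\cdot h=0$. This holds: every curl-free $h\in L^6$ is a limit of $\nabla\psi_n$ with $\psi_n\in C_c^\infty$ (via $h=\nabla\psi$, $\psi\in D^{1,6}$, and truncation), and $\dive F=0$. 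Strict convexity then makes $g$ the unique minimizer.

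For part (ii), I first treat Beltrami fields. If $\curl u=\kappa u$ with $\kappa\neq0$, then $\dive u=0$, so the equation becomes $\kappa^2u=|u|^4u$. Hence $|u|^4=\kappa^2$ wherever $u\neq0$, which means $|u|\in\{0,|\kappa|^{1/2}\}$. An $L^6(\R^3)$ field of this kind must vanish on a set of infinite measure. Moreover $u$ is smooth, since $-\Delta u=\kappa^2u$ is elliptic. By continuity, $|u|$ is then locally constant, and since it is zero on a set of infinite measure, $u\equiv0$. If $\kappa=0$, then $\curl u=0$, and testing the weak form with $\phi=u$ (by density) gives $\int|u|^6=0$.

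Finally, the polarized Aubin--Talenti case. If $u=aU$ solved \eqref{eq:main}, then $\dive(|u|^4u)=|a|^4\,a\cdot\nabla(U^5)$ would have to vanish. But $a\cdot\nabla U^5\not\equiv0$ for the radial, strictly decreasing profile $U$, a contradiction. So such a $u$ is not even a solution, and in particular not a ground state.
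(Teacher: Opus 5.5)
Your proposal is correct. In part (i) and in the Beltrami case it takes a different route from the paper.

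\textbf{Part (i).} The paper first splits $u=v+g$ using the Riesz-transform Helmholtz projection, which is available for every $u\in L^6$. It then uses the curl--curl identity to get $-\Delta v=F$ and identifies $v=N*F$ by applying Liouville to the $L^6$ harmonic field $v-N*F$. It obtains $\int F\cdot h=0$ on $\G_6$ from the weak formulation, via density of $C_c^\infty$ in the graph norm of $W^{6,2}(\curl;\R^3)$.

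You instead define $v:=N*F$ directly and show that $g:=u-v$ is curl-free by applying Liouville to $\omega=\curl g\in L^2$, which is both divergence- and curl-free. This needs only HLS and Liouville, with no Fourier multipliers. The cost is that it uses the equation from the start, whereas the paper's splitting exists for arbitrary $L^6$ fields.

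The step you flag as the main obstacle is in fact immediate. Since $\curl\curl v=F$ in $\mathcal D'$ and $\curl v\in L^2$, you get $\int\curl g\cdot\curl\phi=0$ for all test fields $\phi$.

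Three small points of rigor:
\begin{enumerate}
\item $\nabla v\in L^2$ follows most directly from HLS with $\alpha=1$, using $|\nabla N|\leq C|x|^{-2}$, rather than from Calder\'on--Zygmund.
\item The identity $\dive v=N*\dive F$ is formal. It is cleaner to note that $-\Delta(\dive v)=\dive F=0$ and $\dive v\in L^2$, so $\dive v=0$ by Liouville.
\item To approximate a curl-free $h=\nabla\psi\in L^6$ by $\nabla\psi_n$ with $\psi_n\in C_c^\infty$, subtract the mean of $\psi$ on $B_{2R}\setminus B_R$ before cutting off, and use Poincar\'e on that annulus. Since $6>3$, the potential $\psi$ need not decay.
\end{enumerate}

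\textbf{Beltrami case, $\kappa\neq0$.} The paper shows that $u\in L^2$ and uses a Fourier-support argument for $-\Delta u=\kappa^2u$. This proves that no nonzero finite-energy field satisfies $\curl u=\kappa u$ at all, independently of the equation. Your argument uses the nonlinearity instead: $\kappa^2u=|u|^4u$ forces $|u|\in\{0,|\kappa|^{1/2}\}$, and smoothness plus connectedness plus $u\in L^6$ then give $u\equiv0$. This is more elementary and proves exactly what the theorem states, though not the stronger equation-free statement.

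Your Aubin--Talenti exclusion is the same as the paper's.
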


\begin{remark}\label{rem:problem-P2}
Problem~2 in \cite[Section~7]{MederskiSzulkin2021} asks for
an explicit ground-state expression or information on ground-state
decay. Theorem~\ref{thm:main-conclusions} provides structural
results relevant to this question, but does not resolve it.
The representation in part~(i) remains implicit because its source
$F=|u|^4u$ depends on the unknown solution. Ground-state decay
requires further estimates for the source and the nonlinear correction.
\end{remark}

\section{Proofs of Theorems 1 and 2}\label{sec:proof-one}

In this section, we prove Theorems~\ref{thm:main-intro}
and~\ref{thm:examples-extension}. We first describe the curl-free
kernel and the associated minimization problem. We then construct
aligned test fields on product domains and compute their quotients.

\subsection{Curl-free corrections and the boundary condition}\label{sec:setting}

Throughout the proof of Theorem~\ref{thm:main-intro}, fix $1<p<3$ and
$p^*=3p/(3-p)$.

\Needspace{7\baselineskip}
We begin with a representation of the curl-free kernel. The
connectedness assumption below ensures that the potential has zero trace.
\begin{proposition}
\label{thm:potential-representation}
Let $\Omega\subset\R^3$ be a bounded Lipschitz domain with connected
exterior $\R^3\setminus\overline\Omega$. Then
\begin{equation}\label{eq:p-kernel}
 \Wker_{p,\Omega}=\nabla W_0^{1,p^*}(\Omega).
\end{equation}
Each $w\in\Wker_{p,\Omega}$ has a unique potential
$\phi_w\in W_0^{1,p^*}(\Omega)$, with
$\|\phi_w\|_{W^{1,p^*}(\Omega)}\leq C_{\Omega,p}\|w\|_{L^{p^*}(\Omega)}$.
\end{proposition}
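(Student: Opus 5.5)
We prove the two inclusions separately and then the uniqueness and the estimate.

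\emph{Inclusion $\nabla W_0^{1,p^*}(\Omega)\subset\Wker_{p,\Omega}$.} Take $\phi\in W_0^{1,p^*}(\Omega)$ and choose $\phi_n\in C_c^\infty(\Omega)$ with $\phi_n\to\phi$ in $W^{1,p^*}(\Omega)$. Then $u_n=\nabla\phi_n\in C_c^\infty(\Omega;\R^3)$ satisfies $u_n\to\nabla\phi$ in $L^{p^*}$, and $\curl u_n=0$ converges trivially to $\curl\nabla\phi=0$. Hence $\nabla\phi\in\Xzerop(\Omega)$ and it is curl-free.

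\emph{Inclusion $\Wker_{p,\Omega}\subset\nabla W_0^{1,p^*}(\Omega)$.} Let $w\in\Wker_{p,\Omega}$ with approximants $u_n\in C_c^\infty(\Omega;\R^3)$. Denote by $\tilde w$ and $\tilde u_n$ the extensions by zero to $\R^3$.
\begin{itemize}
\item For every $\psi\in C_c^\infty(\R^3;\R^3)$ we have $\int\tilde u_n\cdot\curl\psi=\int_\Omega\curl u_n\cdot\psi$. Passing to the limit gives $\int\tilde w\cdot\curl\psi=\int_\Omega\curl w\cdot\psi=0$. So $\curl\tilde w=0$ in $\mathcal D'(\R^3)$, with $\tilde w\in L^{p^*}(\R^3)$ compactly supported in $\overline\Omega$.
\item Since $\R^3$ is simply connected, $\tilde w=\nabla\Phi$ for some $\Phi\in W^{1,p^*}_{\mathrm{loc}}(\R^3)$, unique up to a constant. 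Concretely, mollify to get $\tilde w_\delta=\nabla\Phi_\delta$ by the Poincaré lemma, normalize the means on a fixed ball, and pass to the limit using the Poincaré inequality on balls.
\item On the open set $E=\R^3\setminus\overline\Omega$ we have $\nabla\Phi=0$. Because $E$ is connected (the hypothesis), $\Phi\equiv c$ on $E$. Replace $\Phi$ by $\Phi-c$ and set $\phi_w=\Phi|_\Omega$.
\item Then $\Phi\in W^{1,p^*}(\R^3)$ (it is compactly supported) and vanishes a.e.\ outside $\overline\Omega$. Since $\partial\Omega$ is Lipschitz, it has measure zero, so $\Phi=0$ a.e.\ on $\R^3\setminus\Omega$.
\item For a Lipschitz domain, a $W^{1,p^*}(\R^3)$ function vanishing a.e.\ outside $\Omega$ has zero trace and lies in $W_0^{1,p^*}(\Omega)$. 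This follows from the characterization of $W_0^{1,q}$ on Lipschitz domains via traces, or from the fact that zero-extension lies in $W^{1,q}$. Hence $w=\nabla\phi_w$ with $\phi_w\in W_0^{1,p^*}(\Omega)$.
\end{itemize}
This last step is the main technical point. It is exactly where connectedness of the exterior is used: if $E$ had a bounded component, $\Phi$ could be a nonzero constant there, which would give a curl-free field without zero-trace potential.

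\emph{Uniqueness and the estimate.} If $\nabla\phi=\nabla\psi$ with $\phi,\psi\in W_0^{1,p^*}(\Omega)$, then $\phi-\psi$ is constant on the connected set $\Omega$. A constant with zero trace is zero, so $\phi_w$ is unique. The bound follows from the Poincaré inequality in $W_0^{1,p^*}(\Omega)$ on the bounded domain $\Omega$:
\[
\|\phi_w\|_{L^{p^*}}\le C\|\nabla\phi_w\|_{L^{p^*}}=C\|w\|_{L^{p^*}}.
\]
Together with $\|\nabla\phi_w\|_{L^{p^*}}=\|w\|_{L^{p^*}}$, this gives $\|\phi_w\|_{W^{1,p^*}(\Omega)}\le C_{\Omega,p}\|w\|_{L^{p^*}(\Omega)}$.
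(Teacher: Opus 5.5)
Your proposal is correct and follows the paper's proof essentially step by step. The shared steps are: zero extension of the approximants to get $\curl\widetilde w=0$ on $\R^3$, the distributional Poincar\'e lemma, constancy of the potential on the connected exterior, the Lipschitz characterization of $W_0^{1,p^*}(\Omega)$, the converse inclusion via gradients of $C_c^\infty$ approximants, and Poincar\'e's inequality for uniqueness and the bound. You also supply details the paper leaves implicit, namely the integration-by-parts identity for the curl of the zero extension and the fact that $\partial\Omega$ is a null set. One small correction to your commentary: connectedness of the exterior is used in the constancy step, not in the final trace step.
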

\begin{proof}
Let $w\in\Wker_{p,\Omega}$. Choose
$w_n\in C_c^\infty(\Omega;\R^3)$ such that $w_n\to w$ in $L^{p^*}$
and $\curl w_n\to0$ in $L^p$. Their zero extensions satisfy
\[
 \widetilde w_n\to\widetilde w\quad\text{in }L^{p^*}(\R^3),
 \qquad
 \curl\widetilde w_n=\widetilde{\curl w_n}\to0
 \quad\text{in }L^p(\R^3).
\]
It follows that $\curl\widetilde w=0$ in distributions. By the
distributional Poincar\'e lemma, there exists
$\Phi\in W_{\mathrm{loc}}^{1,p^*}(\R^3)$ with
$\nabla\Phi=\widetilde w$.

Since $\nabla\Phi=0$ on the connected exterior
$\R^3\setminus\overline\Omega$, $\Phi$ is constant there.
Subtracting this constant, we may assume that $\Phi=0$ on the
exterior. The boundedness and Lipschitz regularity of $\Omega$
then imply that $\Phi\in W^{1,p^*}(\R^3)$ and
$\phi_w:=\Phi|_\Omega\in W_0^{1,p^*}(\Omega)$. Thus
\begin{equation}\label{eq:kernel-first-inclusion}
 \Wker_{p,\Omega}\subset\nabla W_0^{1,p^*}(\Omega).
\end{equation}

Conversely, if $\phi\in W_0^{1,p^*}(\Omega)$, take
$\phi_n\in C_c^\infty(\Omega)$ converging to $\phi$ in
$W^{1,p^*}(\Omega)$. Then $\nabla\phi_n\to\nabla\phi$ in $L^{p^*}$
and $\curl(\nabla\phi_n)=0$. By the definition of $\Xzerop(\Omega)$,
we obtain
\begin{equation}\label{eq:kernel-second-inclusion}
 \nabla W_0^{1,p^*}(\Omega)\subset\Wker_{p,\Omega}.
\end{equation}
This proves \eqref{eq:p-kernel}. Finally, Poincar\'e's inequality
gives uniqueness of the zero-trace potential and the estimate
\[
 \|\phi_w\|_{W^{1,p^*}(\Omega)}
 \leq C_{\Omega,p}\|\nabla\phi_w\|_{L^{p^*}(\Omega)}
 =C_{\Omega,p}\|w\|_{L^{p^*}(\Omega)}.
\]
\end{proof}

\begin{remark}\label{rem:topology-sharp}
If the exterior is disconnected, the potential may have different
constants on its components, and subtracting one constant need not give
zero boundary trace. The connected-exterior hypothesis is therefore
essential to the representation used here. At $p=2$, it gives
$\Wker_{2,\Omega}=\nabla W_0^{1,6}(\Omega)$.
\end{remark}

\Needspace{7\baselineskip}
To define the curl-free correction, we use the following consequence
of weak compactness and strict convexity.
\begin{lemma}
\label{lem:strict-convex-projection}
Let $E\subset\R^3$ be measurable, let \(1<p<\infty\), let \(Z\) be a closed linear subspace of
\(L^{p}(E,\R^{m})\), and fix \(q\in L^{p}\).  Then
\begin{equation*}
    \inf_{z\in Z}\int_E\abs{q+z}^{p}\dd x
\end{equation*}
has a unique minimizer \(z_{q}\in Z\).  It is characterized by
\begin{equation}\label{eq:abstract-Euler}
    \int_E
       \abs{q+z_{q}}^{p-2}(q+z_{q})\cdot h\dd x=0
    \qquad(h\in Z).
\end{equation}
Moreover,
\begin{equation*}
    \norm{q+z_{q}}_{p}\leq\norm{q}_{p},
    \qquad
    \norm{z_{q}}_{p}\leq2\norm{q}_{p}.
\end{equation*}
\end{lemma}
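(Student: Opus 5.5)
The plan is to use the direct method in the reflexive space \(L^{p}(E,\R^{m})\), to get uniqueness from strict convexity of \(\xi\mapsto\abs{\xi}^p\), and to obtain the Euler equation by differentiating along \(Z\). Set \(\Phi(z):=\int_E\abs{q+z}^p\dd x\) for \(z\in Z\). This functional is finite, convex and continuous on \(L^p\), hence weakly lower semicontinuous. It is also coercive, since \(\norm{z}_p\leq\norm{q+z}_p+\norm{q}_p\). Take a minimizing sequence \(\{z_n\}\subset Z\). Coercivity makes it bounded. Because \(1<p<\infty\), the space \(L^p\) is reflexive, so a subsequence converges weakly to some \(z_q\). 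The subspace \(Z\) is closed and convex, hence weakly closed, so \(z_q\in Z\). Weak lower semicontinuity then shows that \(z_q\) is a minimizer.

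For uniqueness, suppose \(z_1\neq z_2\) are two minimizers with common value \(m\). The map \(\xi\mapsto\abs{\xi}^p\) is strictly convex on \(\R^m\) for \(p>1\). Hence \(\abs{q+\tfrac12(z_1+z_2)}^p<\tfrac12\abs{q+z_1}^p+\tfrac12\abs{q+z_2}^p\) on the set \(\{z_1\neq z_2\}\), which has positive measure. Integrating gives \(\Phi(\tfrac12(z_1+z_2))<m\), a contradiction.

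For the characterization, fix \(h\in Z\) and consider \(t\mapsto\Phi(z_q+th)\). Pointwise, \(\frac{d}{dt}\abs{q+z_q+th}^p=p\abs{q+z_q+th}^{p-2}(q+z_q+th)\cdot h\). For \(\abs{t}\leq1\) this derivative is dominated by \(p(\abs{q+z_q}+\abs{h})^{p-1}\abs{h}\), which lies in \(L^1\) by H\"older's inequality with exponents \(p/(p-1)\) and \(p\). Dominated convergence therefore lets us differentiate under the integral. At the minimum \(t=0\) the derivative vanishes, which gives \eqref{eq:abstract-Euler}. Conversely, suppose some \(z\in Z\) satisfies \eqref{eq:abstract-Euler}. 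Convexity gives the pointwise inequality \(\abs{q+z'}^p\geq\abs{q+z}^p+p\abs{q+z}^{p-2}(q+z)\cdot(z'-z)\) for every \(z'\in Z\). Integrating and using \(z'-z\in Z\) yields \(\Phi(z')\geq\Phi(z)\). So \(z\) is a minimizer, and by uniqueness \(z=z_q\).

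The norm bounds follow directly. Comparing with the competitor \(0\in Z\) gives \(\norm{q+z_q}_p\leq\norm{q}_p\). The triangle inequality then gives \(\norm{z_q}_p\leq\norm{q+z_q}_p+\norm{q}_p\leq2\norm{q}_p\).

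The only step needing care is justifying differentiation under the integral, and the domination bound above handles it. There is no substantial obstacle; the argument is the standard strictly convex projection in a uniformly convex space.
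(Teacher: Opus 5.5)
Your proposal is correct and follows essentially the same route as the paper's proof. Both use the direct method with weak compactness and weak closedness of $Z$, uniqueness from strict convexity, the Euler identity by differentiating along $Z$ with the converse from the convexity inequality, and the norm bounds by comparing with $z=0$ and applying the triangle inequality. Your dominated-convergence bound and your positive-measure argument for uniqueness spell out details the paper states only briefly.
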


\begin{proof}
Set $\mathcal F(z)=\|q+z\|_p^p$ and
$\mu=\inf_{z\in Z}\mathcal F(z)$. Let $\{z_n\}\subset Z$ be a
minimizing sequence. Since
\[
 \|z_n\|_p\leq\|q+z_n\|_p+\|q\|_p,
\]
the sequence is bounded in $L^p$. After passing to a subsequence,
$z_n\rightharpoonup z_q$ in $L^p$. The space $Z$ is weakly closed,
and weak lower semicontinuity gives
\[
 z_q\in Z,\qquad
 \mathcal F(z_q)\leq\liminf_{n\to\infty}\mathcal F(z_n)=\mu.
\]
Hence $z_q$ is a minimizer. Its uniqueness follows from the strict
convexity of $\mathcal F$.

For $h\in Z$, differentiating $\mathcal F(z_q+th)$ at $t=0$
gives \eqref{eq:abstract-Euler}. The differentiation is justified
by H\"older's inequality. Conversely, if
\eqref{eq:abstract-Euler} holds, convexity gives, for every $z\in Z$,
\[
 \mathcal F(z)\geq\mathcal F(z_q)
 +p\int_E|q+z_q|^{p-2}(q+z_q)\cdot(z-z_q)\dd x
 =\mathcal F(z_q).
\]
Thus the Euler condition also characterizes the minimizer.
Comparing with $z=0$ yields $\|q+z_q\|_p\leq\|q\|_p$, and the
triangle inequality gives $\|z_q\|_p\leq2\|q\|_p$.
\end{proof}

\Needspace{7\baselineskip}
Applying Lemma~\ref{lem:strict-convex-projection} to the curl-free
kernel gives the correction used in \eqref{eq:p-constant}.
\begin{proposition}\label{prop:projection}
For every bounded Lipschitz domain $\Omega$ and $u\in\Xzerop(\Omega)$,
there is a unique $w_{p,\Omega}(u)\in\Wker_{p,\Omega}$ such that
\begin{equation}\label{eq:projection-minimum}
 \int_\Omega|u+w_{p,\Omega}(u)|^{p^*}\,dx
 =\min_{w\in\Wker_{p,\Omega}}\int_\Omega|u+w|^{p^*}\,dx.
\end{equation}
It is characterized by
\begin{equation}\label{eq:euler-correction}
 \int_\Omega |u+w_{p,\Omega}(u)|^{p^*-2}
 (u+w_{p,\Omega}(u))\cdot z\,dx=0\qquad(z\in\Wker_{p,\Omega}).
\end{equation}
For $p=2$, write $w_\Omega(u)=w_{2,\Omega}(u)$.
\end{proposition}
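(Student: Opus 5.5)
The plan is to obtain Proposition~\ref{prop:projection} as a direct application of Lemma~\ref{lem:strict-convex-projection} with $E=\Omega$, $m=3$, exponent $p^*$ in place of $p$ (note $1<p^*<\infty$ since $1<p<3$), $q=u$ and $Z=\Wker_{p,\Omega}$. Since $u\in\Xzerop(\Omega)\subset L^{p^*}(\Omega;\R^3)$, the only hypothesis to check is that $\Wker_{p,\Omega}$ is a closed linear subspace of $L^{p^*}(\Omega;\R^3)$. Once this is done, the lemma gives existence and uniqueness of the minimizer $w_{p,\Omega}(u)$, hence \eqref{eq:projection-minimum} with the infimum in \eqref{eq:p-constant} attained. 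The Euler identity \eqref{eq:abstract-Euler} becomes exactly \eqref{eq:euler-correction}, together with its characterizing property. The bounds $\norm{u+w_{p,\Omega}(u)}_{p^*}\le\norm{u}_{p^*}$ and $\norm{w_{p,\Omega}(u)}_{p^*}\le 2\norm{u}_{p^*}$ come for free.

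Linearity of $\Wker_{p,\Omega}$ is immediate: $\Xzerop(\Omega)$ is a linear space, since approximating sequences add, and $\curl$ is linear. The key step is closedness in the $L^{p^*}$ topology alone. Note that the norm of $W^{p^*,p}(\curl;\Omega)$ reduces to the $L^{p^*}$ norm on curl-free fields.

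To prove closedness, let $w_k\in\Wker_{p,\Omega}$ with $w_k\to w$ in $L^{p^*}$. First, $\curl w=0$ in distributions, since $\int w_k\cdot\curl\varphi\,dx=0$ passes to the limit for every $\varphi\in C_c^\infty(\Omega;\R^3)$. Hence $w\in W^{p^*,p}(\curl;\Omega)$. Next I use a diagonal argument. For each $k$, pick $\psi_k\in C_c^\infty(\Omega;\R^3)$ with
\[
\norm{\psi_k-w_k}_{L^{p^*}}<1/k \quad\text{and}\quad \norm{\curl\psi_k}_{L^p}<1/k,
\]
which is possible because $\curl w_k=0$. Then $\psi_k\to w$ in $L^{p^*}$ and $\curl\psi_k\to0=\curl w$ in $L^p$, so $w\in\Xzerop(\Omega)$ and therefore $w\in\Wker_{p,\Omega}$.

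In effect, $\Wker_{p,\Omega}$ is the intersection of the closed subspace $\Xzerop(\Omega)$ of $W^{p^*,p}(\curl;\Omega)$ with the kernel of $\curl$, and on that kernel the two norms coincide. This closedness verification is the only point needing care, and the diagonal argument settles it. No Lipschitz regularity or connected-exterior hypothesis is needed, so Proposition~\ref{thm:potential-representation} is not required here. When the exterior is connected, one could alternatively deduce closedness from $\Wker_{p,\Omega}=\nabla W_0^{1,p^*}(\Omega)$ together with the Poincaré estimate stated there, but the direct argument is more general.
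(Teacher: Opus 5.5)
Your proposal is correct and follows essentially the same route as the paper: apply Lemma~\ref{lem:strict-convex-projection} with exponent $p^*$ and $Z=\Wker_{p,\Omega}$, after checking that $\Wker_{p,\Omega}$ is closed in $L^{p^*}$. You get closedness from $\curl w=0$ in distributions together with the closedness of $\Xzerop(\Omega)$ in the graph norm, and your diagonal argument simply spells out that last step, which the paper cites directly.
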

\begin{proof}
It suffices to prove that $\Wker_{p,\Omega}$ is closed in $L^{p^*}$.
Suppose that $w_n\in\Wker_{p,\Omega}$ and $w_n\to w$ in $L^{p^*}$.
Passing to distributions gives $\curl w=0$. Consequently,
\[
 \|w_n-w\|_{W^{p^*,p}(\curl;\Omega)}
 =\|w_n-w\|_{L^{p^*}(\Omega)}\to0.
\]
Since $\Xzerop(\Omega)$ is closed in the graph norm,
$w\in\Xzerop(\Omega)$ and hence $w\in\Wker_{p,\Omega}$.
The conclusion now follows from
Lemma~\ref{lem:strict-convex-projection}, applied with exponent $p^*$
and $Z=\Wker_{p,\Omega}$.
\end{proof}

\subsection{Aligned fields and proof of Theorem 1}\label{sec:aligned}
Let $D\subset\R^2$ be bounded and Lipschitz and set
$\Omega_a=(0,a)\times D$, with $x=(s,y)$.

\Needspace{7\baselineskip}
Before applying Proposition~\ref{thm:potential-representation}, we
verify that the exterior of $\Omega_a$ is connected.
\begin{lemma}
\label{lem:product-exterior}
For every bounded domain \(D\subset\mathbb R^2\) and every \(a>0\),
\(\mathbb R^3\setminus([0,a]\times\overline D)\) is path connected.
\end{lemma}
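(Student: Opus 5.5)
The plan is to connect every exterior point to a single far-away point by an explicit path. Set $K=[0,a]\times\overline D$. Since $D$ is bounded, choose $R>0$ with $\overline D\subset B_R(0)\subset\R^2$. Then $K\subset[0,a]\times B_R(0)$.

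Take an arbitrary point $x=(s,y)\in\R^3\setminus K$. Then either $s\notin[0,a]$ or $y\notin\overline D$, and we treat the two cases separately.

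\emph{Case $s\notin[0,a]$.} Say $s<0$; the case $s>a$ is symmetric. The whole slab $\{s'<0\}$ lies in the exterior and is convex. So the straight segment from $x$ to $(-1,0)$ stays in $\R^3\setminus K$.

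\emph{Case $y\notin\overline D$.} Move along the axial direction keeping $y$ fixed, i.e.\ follow the segment $t\mapsto(t,y)$ from $s$ to $-1$. Every point on it has second coordinate $y\notin\overline D$, so the path avoids $K$. It ends at $(-1,y)$, which lies in the slab $\{s'<0\}$. From there, use the segment inside that slab to reach $(-1,0)$.

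Every exterior point is therefore joined to $(-1,0)$ within $\R^3\setminus K$. Hence $\R^3\setminus K$ is path connected.

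There is no real obstacle. The only point to check is the axial segment in the second case: it must avoid $K$ even where $t\in[0,a]$, and it does, because membership in $K$ requires the second coordinate to lie in $\overline D$. Notably, no connectedness of $\R^2\setminus\overline D$ is needed: holes in $D$ are circumvented through the ends $s<0$ or $s>a$. The bound $R$ is not even needed, only the two half-spaces $\{s<0\}$ and $\{s>a\}$; both connect to each other via any $y\notin\overline D$, which exists because $D$ is bounded.

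Finally, this lemma is what lets us apply Proposition~\ref{thm:potential-representation} to $\Omega_a$, because $\overline{\Omega_a}=[0,a]\times\overline D$.
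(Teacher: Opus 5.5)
Your proof is correct and follows essentially the same route as the paper's: piecewise-linear paths that move axially when $y\notin\overline D$ and move within a cross-section when $s\notin[0,a]$. The only difference is the base point. The paper uses $(a+1,y_*)$ with $y_*\notin\overline D$, which handles $s<0$ and $s>a$ uniformly. With your base point $(-1,0)$, the case $s>a$ is not literally ``symmetric'': it needs the bridge through some $y_*\notin\overline D$, which you only supply in your closing paragraph.
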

\begin{proof}
Choose $y_*\notin\overline D$ and put $x_*=(a+1,y_*)$.
For an exterior point $x=(s,y)$ with $s\notin[0,a]$, the path
\begin{equation}
 (s,y)\longrightarrow(s,y_*)\longrightarrow(a+1,y_*)
 \label{eq:exterior-path-one}
\end{equation}
lies in the exterior. Indeed, its first segment has axial coordinate
outside $[0,a]$, and its second has cross-sectional coordinate
$y_*\notin\overline D$.

If $s\in[0,a]$, then $y\notin\overline D$. In this case, use
\begin{equation}
 (s,y)\longrightarrow(a+1,y)\longrightarrow(a+1,y_*).
 \label{eq:exterior-path-two}
\end{equation}
Both segments again lie in the exterior. Every exterior point can
therefore be joined to $x_*$, which proves the assertion.
\end{proof}
The argument applies also when $D$ is multiply connected.

For $f\in W_0^{1,p}(D)$, define
\begin{equation*}
 U_f(s,y)=f(y)e_1=(f(y),0,0).
\end{equation*}
Then
\begin{equation}\label{eq:curl-aligned}
 \curl U_f=(0,\partial_{y_2}f,-\partial_{y_1}f),
 \qquad |\curl U_f|=|\nabla_y f|.
\end{equation}

\Needspace{7\baselineskip}
We first check that $U_f$ is admissible in \eqref{eq:p-constant}.
\begin{lemma}\label{lem:admissible}
For every $f\in W_0^{1,p}(D)$, $U_f\in\Xzerop(\Omega_a)$.
\end{lemma}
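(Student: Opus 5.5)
The plan is to approximate $f$ in $W_0^{1,p}(D)$ by smooth compactly supported functions, and to cut off in the axial variable. The field $U_f$ does not vanish near the faces $\{0\}\times D$ and $\{a\}\times D$. The cutoff will not be a problem there, because those faces carry no tangential constraint on the $e_1$-component beyond what a cutoff can supply cheaply. We only need $\curl$-convergence in $L^p$ together with $L^{p^*}$ convergence, and $p<p^*$ makes a thin cutoff affordable.

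\emph{Step 1 (integrability).} By the two-dimensional embedding $W_0^{1,p}(D)\hookrightarrow L^{p^*}(D)$ we have $f\in L^{p^*}(D)$, so $U_f\in L^{p^*}(\Omega_a)$ with $\|U_f\|_{L^{p^*}}^{p^*}=a\|f\|_{L^{p^*}(D)}^{p^*}$. By \eqref{eq:curl-aligned}, $\curl U_f\in L^p$ with $\|\curl U_f\|_p^p=a\|\nabla f\|_p^p$. Hence $U_f\in W^{p^*,p}(\curl;\Omega_a)$. Moreover $f\mapsto U_f$ is continuous from $W_0^{1,p}(D)$ into the graph norm. Since $\Xzerop(\Omega_a)$ is closed in that norm, it suffices to treat $f\in C_c^\infty(D)$.

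\emph{Step 2 (axial cutoff).} For $f\in C_c^\infty(D)$, choose $\chi_n\in C_c^\infty((0,a))$ with $0\le\chi_n\le1$, $\chi_n=1$ on $(1/n,a-1/n)$, and $|\chi_n'|\le Cn$. Set $u_n(s,y)=\chi_n(s)f(y)e_1\in C_c^\infty(\Omega_a;\R^3)$. Dominated convergence gives $u_n\to U_f$ in $L^{p^*}$. Next,
\[
\curl u_n=\chi_n\curl U_f+\nabla\chi_n\times f e_1 .
\]
Since $\nabla\chi_n=\chi_n'(s)e_1$ is parallel to $e_1$, the second term vanishes identically. Thus $\curl u_n=\chi_n\curl U_f\to\curl U_f$ in $L^p$, again by dominated convergence.

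\emph{Step 3 (conclusion).} This gives $U_f\in\Xzerop(\Omega_a)$ for smooth $f$. The general case then follows from Step 1 by density, taking a diagonal sequence and using the closedness of $\Xzerop(\Omega_a)$ in the graph norm.

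The only potential obstacle was the cutoff at the axial faces. The alignment $\nabla\chi_n\parallel e_1$ kills the commutator term completely, so no rate condition relating $p$ and $p^*$ is even needed.
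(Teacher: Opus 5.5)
Your proposal is correct and follows essentially the same route as the paper. Both use an axial cutoff $\chi_n(s)$ whose gradient is parallel to $e_1$, so that $\curl(\chi_n U_f)=\chi_n\curl U_f$ and dominated convergence applies, and then pass to general $f$ via density of $C_c^\infty(D)$ in $W_0^{1,p}(D)$, the two-dimensional embedding into $L^{p^*}(D)$, and closedness of $\Xzerop(\Omega_a)$ in the graph norm; as you note yourself, the bound $|\chi_n'|\le Cn$ and the opening heuristic about $p<p^*$ are not needed.
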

\begin{proof}
First let $f\in C_c^\infty(D)$. Choose
$\eta_n\in C_c^\infty(0,a)$ with $0\leq\eta_n\leq1$ and
$\eta_n\to1$ pointwise. Since $\nabla\eta_n$ is parallel to $U_f$,
\[
 \curl(\eta_nU_f)
 =\nabla\eta_n\times U_f+\eta_n\curl U_f
 =\eta_n\curl U_f.
\]
By dominated convergence,
\begin{align*}
 \|\eta_nU_f-U_f\|_{L^{p^*}(\Omega_a)}^{p^*}
 &=\int_0^a|\eta_n-1|^{p^*}\dd s\int_D|f|^{p^*}\dd y\to0,\\
 \|\curl(\eta_nU_f)-\curl U_f\|_{L^p(\Omega_a)}^p
 &=\int_0^a|\eta_n-1|^p\dd s\int_D|\nabla f|^p\dd y\to0.
\end{align*}
As $\eta_nU_f\in C_c^\infty(\Omega_a;\R^3)$, this proves
$U_f\in\Xzerop(\Omega_a)$ for smooth $f$.

For $f\in W_0^{1,p}(D)$, take $f_k\in C_c^\infty(D)$ converging
to $f$ in $W^{1,p}(D)$. The two-dimensional Sobolev embedding
also gives convergence in $L^{p^*}(D)$. Here
$p^*<2p/(2-p)$ if $p<2$, while every finite exponent is
admissible if $p\geq2$. It follows from \eqref{eq:curl-aligned} that
\begin{align*}
 \|U_{f_k}-U_f\|_{L^{p^*}(\Omega_a)}
 &=a^{1/p^*}\|f_k-f\|_{L^{p^*}(D)}\to0,\\
 \|\curl U_{f_k}-\curl U_f\|_{L^p(\Omega_a)}
 &=a^{1/p}\|\nabla f_k-\nabla f\|_{L^p(D)}\to0.
\end{align*}
The closedness of $\Xzerop(\Omega_a)$ completes the proof.
\end{proof}

\Needspace{7\baselineskip}
We next prove that the optimal curl-free correction of $U_f$ vanishes.
This allows us to compute the denominator in \eqref{eq:p-constant}.
\begin{lemma}\label{lem:zero-correction}
For every $f\in W_0^{1,p}(D)$, zero is the unique minimizer of
\begin{equation*}
 w\longmapsto\int_{\Omega_a}|U_f+w|^{p^*}\,dx,
 \qquad w\in\Wker_{p,\Omega_a}.
\end{equation*}
In particular,
\begin{equation}\label{eq:zero-correction}
 w_{p,\Omega_a}(U_f)=0,\qquad
 \inf_{w\in\Wker_{p,\Omega_a}}\int_{\Omega_a}|U_f+w|^{p^*}\,dx
 =a\int_D|f|^{p^*}\,dy.
\end{equation}
\end{lemma}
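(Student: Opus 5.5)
By Proposition~\ref{prop:projection}, the minimizer is unique and is characterized by the Euler condition \eqref{eq:euler-correction}. So the task reduces to checking that $w=0$ satisfies that condition, i.e.
\[
 \int_{\Omega_a}|U_f|^{p^*-2}U_f\cdot z\,dx=0\qquad(z\in\Wker_{p,\Omega_a}).
\]
The connected-exterior hypothesis of Proposition~\ref{thm:potential-representation} holds by Lemma~\ref{lem:product-exterior}. Hence every $z\in\Wker_{p,\Omega_a}$ can be written as $z=\nabla\phi$ with $\phi\in W_0^{1,p^*}(\Omega_a)$. Since $U_f=f(y)e_1$, only the axial component of $z$ enters, and the condition becomes
\[
 \int_{\Omega_a}|f(y)|^{p^*-2}f(y)\,\partial_s\phi(s,y)\,dx=0 .
\]

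To verify this identity, I first take $\phi\in C_c^\infty(\Omega_a)$. By Fubini,
\[
 \int_D|f(y)|^{p^*-2}f(y)\Bigl(\int_0^a\partial_s\phi(s,y)\,ds\Bigr)dy .
\]
The inner integral equals $\phi(a,y)-\phi(0,y)=0$, because $\phi$ has compact support in $(0,a)\times D$.

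Next I extend to general $\phi\in W_0^{1,p^*}(\Omega_a)$ by density. The weight $g(y):=|f|^{p^*-2}f$, viewed as a function on $\Omega_a$ independent of $s$, lies in $L^{(p^*)'}(\Omega_a)$, since $|g|^{(p^*)'}=|f|^{p^*}$ and $f\in L^{p^*}(D)$ (this was already used in Lemma~\ref{lem:admissible}). So the linear functional $\phi\mapsto\int g\,\partial_s\phi$ is continuous on $W^{1,p^*}$ by H\"older. Taking $\phi_n\in C_c^\infty(\Omega_a)$ with $\phi_n\to\phi$ in $W^{1,p^*}$ then gives the identity for every $\phi$. Thus $0$ satisfies \eqref{eq:euler-correction}, and by the characterization and uniqueness in Proposition~\ref{prop:projection}, $w_{p,\Omega_a}(U_f)=0$ is the unique minimizer.

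The value of the infimum is then $\int_{\Omega_a}|U_f|^{p^*}\,dx=a\int_D|f|^{p^*}\,dy$, again by Fubini. The only genuinely substantive step is using the zero-trace potential representation, so that the boundary terms at $s=0$ and $s=a$ vanish. That representation is supplied by Proposition~\ref{thm:potential-representation} together with Lemma~\ref{lem:product-exterior}, so I expect no real obstacle beyond the routine density justification.
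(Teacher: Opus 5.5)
Your proposal is correct and follows essentially the same route as the paper: you write each kernel element as $\nabla\varphi$ with $\varphi\in W_0^{1,p^*}(\Omega_a)$, check that zero satisfies the Euler condition because the axial integral $\int_0^a\partial_s\varphi\,ds$ vanishes (via Fubini), and then invoke the uniqueness and characterization in Proposition~\ref{prop:projection}. The only difference is in justifying that vanishing: you prove it for $C_c^\infty(\Omega_a)$ potentials and extend by H\"older continuity of $\varphi\mapsto\int|f|^{p^*-2}f\,\partial_s\varphi$, whereas the paper cites the fact that almost every slice lies in $W_0^{1,p^*}(0,a)$; your density argument is, if anything, the more self-contained of the two.
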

\begin{proof}
By Lemma~\ref{lem:product-exterior} and
Proposition~\ref{thm:potential-representation},
$\Wker_{p,\Omega_a}=\nabla W_0^{1,p^*}(\Omega_a)$.
Let $\varphi\in W_0^{1,p^*}(\Omega_a)$. H\"older's inequality gives
\[
 \int_{\Omega_a}|f(y)|^{p^*-1}|\partial_s\varphi|\dd x
 \leq\|U_f\|_{p^*}^{p^*-1}\|\partial_s\varphi\|_{p^*}<\infty.
\]
For almost every $y\in D$, the slice $\varphi(\cdot,y)$ belongs
to $W_0^{1,p^*}(0,a)$. Hence
\[
 \int_0^a\partial_s\varphi(s,y)\dd s=0.
\]
Using Fubini's theorem, we obtain
\[
 \int_{\Omega_a}|U_f|^{p^*-2}U_f\cdot\nabla\varphi\dd x
 =\int_D|f(y)|^{p^*-2}f(y)
       \left(\int_0^a\partial_s\varphi(s,y)\dd s\right)\dd y
 =0.
\]
Thus zero satisfies \eqref{eq:euler-correction}.
Proposition~\ref{prop:projection} gives
$w_{p,\Omega_a}(U_f)=0$. Finally,
$\int_{\Omega_a}|U_f|^{p^*}\dd x=a\int_D|f|^{p^*}\dd y$,
which proves \eqref{eq:zero-correction}.
\end{proof}

\begin{proof}[Proof of Theorem~\ref{thm:main-intro}]
For $0\ne f\in W_0^{1,p}(D)$, Lemmas~\ref{lem:admissible}
and~\ref{lem:zero-correction} allow $U_f$ in the quotient
\eqref{eq:p-constant}, with zero optimal correction.  By
\eqref{eq:curl-aligned},
\begin{align*}
 \int_{\Omega_a}|\curl U_f|^p\,dx=a\int_D|\nabla f|^p\,dy,
 \qquad
 \int_{\Omega_a}|U_f|^{p^*}\,dx=a\int_D|f|^{p^*}\,dy.
\end{align*}
Consequently,
\begin{align}
 S_{p,\mathrm{curl}}^{\mathrm{int}}(\Omega_a)
 &\leq
 \frac{a\displaystyle\int_D|\nabla f|^p\,\dd y}
 {\left(a\displaystyle\int_D|f|^{p^*}\,\dd y\right)^{p/p^*}}\notag\\
 &=a^{1-p/p^*}
 \frac{\displaystyle\int_D|\nabla f|^p\,\dd y}
 {\left(\displaystyle\int_D|f|^{p^*}\,\dd y\right)^{p/p^*}}. \label{eq:p-bound-calculation}
\end{align}
Since
\begin{equation*}
 1-\frac{p}{p^*}
 =1-\frac{3-p}{3}
 =\frac p3,
\end{equation*}
\eqref{eq:p-bound-calculation} is precisely
\begin{equation}\label{eq:p-bound}
 S_{p,\mathrm{curl}}^{\mathrm{int}}(\Omega_a)
 \leq a^{p/3}
 \frac{\displaystyle\int_D|\nabla f|^p\,dy}
 {\left(\displaystyle\int_D|f|^{p^*}\,dy\right)^{p/p^*}}.
\end{equation}
Taking the infimum over $0\ne f\in W_0^{1,p}(D)$ gives
\begin{equation*}
 S_{p,\mathrm{curl}}^{\mathrm{int}}(\Omega_a)
 \leq a^{p/3}\Lambda_{p,p^*}(D),
\end{equation*}
which proves (i).  For $D=Q$,
\begin{equation*}
 0\leq S_{p,\mathrm{curl}}^{\mathrm{int}}((0,a)\times Q)
 \leq a^{p/3}\Lambda_{p,p^*}(Q)
 \xrightarrow[a\to0]{}0.
\end{equation*}
Since every $(0,a)\times Q$ is bounded, convex, and Lipschitz,
\begin{equation*}
 0\leq
 \inf_{\substack{\Omega\subset\R^3\ \mathrm{bounded,\ convex,}\\
                         \mathrm{Lipschitz}}}
 S_{p,\mathrm{curl}}^{\mathrm{int}}(\Omega)
 \leq
 \inf_{a>0}S_{p,\mathrm{curl}}^{\mathrm{int}}((0,a)\times Q)
 =0,
\end{equation*}
which proves (ii).
\end{proof}

\subsection{Explicit counterexamples and proof of Theorem 2}\label{sec:examples}

We now set $p=2$, so $p^*=6$ and
$S_{2,\mathrm{curl}}^{\mathrm{int}}=\Sint$.

\begin{proof}[Proof of Theorem~\ref{thm:examples-extension}]
Let $f_Q(y_1,y_2)=\sin(\pi y_1)\sin(\pi y_2)$ on $Q=(0,1)^2$.
The elementary integrals give
\begin{equation*}
 \int_Q|\nabla f_Q|^2\,dy=\frac{\pi^2}{2},
 \qquad \int_Q|f_Q|^6\,dy=\frac{25}{256}.
\end{equation*}
Applying \eqref{eq:p-bound} with $p=2$ and $a=1/4$ yields
\[
 \Sint(\Omega_B)\leq
 (1/4)^{2/3}\frac{\pi^2/2}{(25/256)^{1/3}}
 =\pi^2(2/25)^{1/3}.
\]
The zero extension of $f_Q$ to $D_L$ belongs to $H_0^1(D_L)$:
if $\{f_n\}\subset C_c^\infty(Q)$ and $f_n\to f_Q$ in $H_0^1(Q)$,
then the sequence of zero extensions
$\{\widetilde f_n\}\subset C_c^\infty(D_L)$ satisfies
\begin{equation*}
 \widetilde f_n\longrightarrow\widetilde f_Q
 \quad\text{in }H_0^1(D_L),
 \qquad
 \int_{D_L}|\nabla\widetilde f_Q|^2
 =\int_Q|\nabla f_Q|^2,
 \qquad
 \int_{D_L}|\widetilde f_Q|^6
 =\int_Q|f_Q|^6.
\end{equation*}
Thus the same upper bound holds for $\Omega_L$. Finally,
\[
 \left(\frac{\pi^2(2/25)^{1/3}}{3(\pi/2)^{4/3}}\right)^3
 =\frac{32\pi^2}{675}<1.
\]
The upper bound is approximately $4.25268$, while $S\approx5.47790$.

Both domains are bounded Lipschitz prisms. The box $\Omega_B$ is convex,
while the L-shaped cross-section makes $\Omega_L$ nonconvex. At a lateral edge
of $\Omega_B$, the normals of adjacent faces differ, so its boundary is
not $C^1$. The reentrant edge generated by $(1,1)\in\partial D_L$
likewise prevents $\partial\Omega_L$ from being $C^1$.
Thus neither boundary is $C^{1,1}$.
\end{proof}

We now compare the intrinsic constant with the constant obtained
by allowing curl-free corrections in the whole space.

Let \(\widetilde u\) denote the zero extension of
\(u\in\Xzero(\Omega)\) to \(\R^3\), and put
\[
\Wker_{\R^3}:=
 \{w\in\Xzero(\R^3):\curl w=0\}.
\]
\begingroup\color{black}
By the definition of $X_{2,0}(\Omega)$, zero extension gives a
continuous map
\[
 E_0:X_{2,0}(\Omega)\longrightarrow
 W^{6,2}(\curl;\mathbb R^3),\qquad
 \curl(E_0u)=E_0(\curl u)
 \quad\hbox{in }\mathcal D'(\mathbb R^3).
\]
This follows by approximation with compactly supported smooth
fields. In general, the extension need not belong to
$W^{1,2}(\R^3;\R^3)$.
\endgroup
The whole-space-corrected constant is
\begin{equation*}
 \Sext(\Omega)
 :=
 \inf_{\substack{u\in\Xzero(\Omega)\\\curl u\neq0}}
 \frac{\displaystyle\int_\Omega|\curl u|^2\,dx}
 {\left(
   \displaystyle\inf_{w\in\Wker_{\R^3}}
   \int_{\R^3}|\widetilde u+w|^6\,dx
 \right)^{1/3}}.
\end{equation*}

\Needspace{7\baselineskip}
For $\Sext(\Omega)$, the following result was proved in
\cite[Theorems~1.1 and~1.2(a)]{MederskiSzulkin2021}.
\begin{proposition}
\label{thm:external-known}
For every nonempty Lipschitz domain \(\Omega\subset\R^3\),
\begin{equation*}
 \Sext(\Omega)=\Scurl>S.
\end{equation*}
\end{proposition}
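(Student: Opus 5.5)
The plan has two halves: the identity $\Sext(\Omega)=\Scurl$, and the strict inequality $\Scurl>S$. Here $\Scurl$ is the whole-space constant obtained by taking $\Omega=\R^3$ in the definition of $\Sext$.

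\emph{Step 1: $\Sext(\Omega)\geq\Scurl$.} Take $u\in\Xzero(\Omega)$ with $\curl u\ne0$. The zero extension $E_0u$ lies in $\Xzero(\R^3)$, because the approximating fields in $C_c^\infty(\Omega;\R^3)$ are also in $C_c^\infty(\R^3;\R^3)$. It also satisfies $\int_{\R^3}|\curl E_0u|^2=\int_\Omega|\curl u|^2$. The whole-space corrections in the denominator are the same class, $\Wker_{\R^3}$. Hence the quotient of $u$ on $\Omega$ equals the whole-space quotient of $E_0u$, which gives the inequality.

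\emph{Step 2: $\Sext(\Omega)\leq\Scurl$.} Translation and dilation invariance of the whole-space quotient is the key tool. I would first reduce $\Scurl$ to an infimum over $u\in C_c^\infty(\R^3;\R^3)$. This uses density in the graph norm, together with continuity of $u\mapsto\inf_w\|u+w\|_6$. That map is $1$-Lipschitz in $L^6$, since it is a distance to a closed subspace. Now fix a ball $B\subset\Omega$. A compactly supported field becomes supported in $B$ after a translation and a rescaling $u_\lambda(x)=\lambda^{1/2}u(\lambda x)$. Both numerator and denominator are invariant under these maps, including the correction class $\Wker_{\R^3}$. Therefore the quotient of the rescaled field on $\Omega$ equals the original whole-space quotient, and taking the infimum yields the bound.

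\emph{Step 3: strict inequality $\Scurl>S$.} This is the main obstacle. First I would show $\Scurl\geq S$ via the Helmholtz splitting $u=v+\nabla\psi$ with $\dive v=0$. Here $v\in D^{1,2}$ with $\|\nabla v\|_2=\|\curl u\|_2$. Choosing $w=-\nabla\psi$ bounds the infimum by $\|v\|_6^2$. Applying componentwise Sobolev to $|v|$ (Kato: $|\nabla|v||\le|\nabla v|$) gives $S\|v\|_6^2\le\|\nabla v\|_2^2$.

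For strictness I would rely on attainment of $\Scurl$, proved by concentration--compactness modulo the kernel. This is the deep part, and I would cite it from \cite{MederskiSzulkin2021}. Suppose equality $\Scurl=S$ held. A minimizer $u$, with divergence-free part $v$, would force equality throughout. First, $|v|$ would be an Aubin--Talenti extremal. Second, Kato's equality would make $v=|v|\,e$ with a constant unit vector $e$. Third, the correction $w$ would be optimal with the Euler condition $\dive(|v|^4v)=0$, i.e. $e\cdot\nabla U^5=0$. This cannot hold for the radial $U=U_{\eps,y}$. Alternatively, $\dive v=e\cdot\nabla U\neq0$ already contradicts $\dive v=0$ directly. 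I expect the delicate points to be justifying the equality cases rigorously and the existence of a minimizer. Both are where I would lean on the cited results.
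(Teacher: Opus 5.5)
Your proof is correct, but it takes a different route from the paper, which gives no argument for this proposition and simply quotes \cite[Theorems~1.1 and~1.2(a)]{MederskiSzulkin2021}.

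\emph{What you prove directly.} You establish the domain independence $\Sext(\Omega)=\Scurl$ by hand. Zero extension identifies each quotient on $\Omega$ with a whole-space quotient, which gives one inequality. For the other, you use three facts:
\begin{itemize}
\item the dilation and translation invariance of the whole-space quotient, including invariance of $\Wker_{\R^3}$;
\item density of $C_c^\infty$ fields in $X_{2,0}(\R^3)$;
\item the $1$-Lipschitz dependence of $u\mapsto\operatorname{dist}_{L^6}(u,\Wker_{\R^3})$.
\end{itemize}
Together these push any competitor into a ball $B\subset\Omega$, which only needs $\Omega$ nonempty and open. The bound $\Scurl\geq S$ then follows from $v=Pu\in\mathcal D^{1,2}(\R^3)$ with $\|\nabla v\|_2=\|\curl u\|_2$, Kato's inequality, and the scalar Sobolev inequality.

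\emph{The strict inequality.} Only the attainment of $\Scurl$ is taken from the literature. This is legitimate as long as the attainment result you cite is proved without already using $\Scurl>S$. Your equality analysis closes correctly. Equality forces $|v|=cU_{\varepsilon,y}>0$. The pointwise identity $|\nabla v|^2=|\nabla|v||^2+|v|^2|\nabla(v/|v|)|^2$ then forces $v/|v|\equiv e$ for a constant unit vector $e$. Hence $\dive v=c\,e\cdot\nabla U_{\varepsilon,y}\not\equiv0$, a contradiction. The Euler-condition variant is unnecessary; it is the same computation as in Proposition~\ref{prop:no-talenti}.

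\emph{What each approach buys.} Your route makes transparent the point this paper turns on. Domain independence holds because the corrections range over the full whole-space kernel $\Wker_{\R^3}$. The lower bound uses that the solenoidal part lies in $\mathcal D^{1,2}(\R^3)$. That is exactly the step Remark~\ref{rem:published-bound} shows is unavailable for intrinsic corrections, which is why $\Sint$ can fall below $S$ while $\Sext$ cannot. The paper's bare citation is shorter and defers both halves, including the hard attainment theorem you also cite, to the original source.
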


\Needspace{7\baselineskip}
The two constants satisfy the following comparison.
\begin{proposition}\label{prop:comparison}
For every bounded Lipschitz domain \(\Omega\),
\begin{equation*}
 \Sint(\Omega)\leq\Sext(\Omega).
\end{equation*}
\end{proposition}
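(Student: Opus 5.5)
The inequality will follow by comparing the two denominators for each fixed admissible $u$. The numerators coincide: $\int_\Omega|\curl u|^2\,dx$ is the same in both quotients, and $\curl(E_0u)=E_0(\curl u)$ has the same $L^2$ norm. It therefore suffices to prove, for every $u\in\Xzero(\Omega)$ with $\curl u\neq0$,
\begin{equation*}
 \inf_{w\in\Wker_{\R^3}}\int_{\R^3}|\widetilde u+w|^6\,dx
 \;\leq\;
 \inf_{w\in\Wker_{2,\Omega}}\int_{\Omega}|u+w|^6\,dx .
\end{equation*}
A larger denominator gives a smaller quotient for each $u$. Taking the infimum over $u$ then gives $\Sint(\Omega)\leq\Sext(\Omega)$.

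To prove the displayed inequality, I will show that zero extension maps $\Wker_{2,\Omega}$ into $\Wker_{\R^3}$. Let $w\in\Wker_{2,\Omega}$ and choose $w_n\in C_c^\infty(\Omega;\R^3)$ with $w_n\to w$ in $L^6(\Omega)$ and $\curl w_n\to0$ in $L^2(\Omega)$. The zero extensions $\widetilde w_n$ lie in $C_c^\infty(\R^3;\R^3)$. They satisfy $\widetilde w_n\to\widetilde w$ in $L^6(\R^3)$ and $\curl\widetilde w_n=\widetilde{\curl w_n}\to0$ in $L^2(\R^3)$. Hence $\curl\widetilde w=0$ in distributions, and $\widetilde w\in\Xzero(\R^3)$ by definition. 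So $\widetilde w\in\Wker_{\R^3}$.

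Since $\widetilde u+\widetilde w$ vanishes outside $\Omega$, we get
\[
\int_{\R^3}|\widetilde u+\widetilde w|^6\,dx=\int_\Omega|u+w|^6\,dx .
\]
The left-hand infimum is therefore at most $\int_\Omega|u+w|^6\,dx$ for every $w\in\Wker_{2,\Omega}$, which is the displayed inequality.

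The only point needing care is that the whole-space denominator is strictly positive whenever $\curl u\neq0$, so that the quotient defining $\Sext$ is meaningful. This is part of the setting of \cite{MederskiSzulkin2021}, summarized in Proposition~\ref{thm:external-known}. Its value does not enter the comparison itself: if that denominator is positive, the inequality of quotients holds as stated. No connectedness of the exterior is needed; in particular Proposition~\ref{thm:potential-representation} is not used, since the argument works directly at the level of the approximating sequences.
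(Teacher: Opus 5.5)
Your proof is correct and is essentially the paper's argument: you use the zero-extended approximating sequences to show $\widetilde w\in\Wker_{\R^3}$ for every $w\in\Wker_{2,\Omega}$. Hence the whole-space denominator is at most the intrinsic one for each admissible $u$, and the quotients compare in the stated direction.

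Your side remark on positivity of the denominator is right, but it follows more directly than by appeal to Proposition~\ref{thm:external-known}. Indeed, $\Wker_{\R^3}$ is a closed subspace of $L^6$, so Lemma~\ref{lem:strict-convex-projection} shows the whole-space infimum is attained. A zero minimum would make $\widetilde u$ curl-free, contradicting $\curl u\neq0$.
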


\begin{proof}
Let $w\in\Wker_{2,\Omega}$ and choose a sequence
$\{w_n\}\subset C_c^\infty(\Omega;\R^3)$ such that
\begin{equation*}
 w_n\to w\quad\text{in }L^6(\Omega),
 \qquad
 \curl w_n\to0\quad\text{in }L^2(\Omega).
\end{equation*}
For their zero extensions,
\begin{equation*}
 \widetilde w_n\to\widetilde w\quad\text{in }L^6(\R^3),
 \qquad
 \curl\widetilde w_n
 =\widetilde{\curl w_n}\to0\quad\text{in }L^2(\R^3).
\end{equation*}
Hence
\begin{equation*}
 \widetilde w\in\Wker_{\R^3}.
\end{equation*}
Hence, for every admissible \(u\),
\[
\begin{aligned}
\inf_{\eta\in\Wker_{\mathbb R^3}}
\int_{\mathbb R^3}|\widetilde u+\eta|^6\,dx
&\leq
\inf_{w\in\Wker_{2,\Omega}}
\int_{\mathbb R^3}|\widetilde u+\widetilde w|^6\,dx\\
&=
\inf_{w\in\Wker_{2,\Omega}}
\int_\Omega|u+w|^6\,dx.
\end{aligned}
\]
Therefore, by the definitions of \(\Sint(\Omega)\) and
\(\Sext(\Omega)\),
\[
\Sint(\Omega)\leq\Sext(\Omega).
\]
\end{proof}

In particular, the small values of $\Sint(\Omega)$ in
Theorem~\ref{thm:examples-extension} are consistent with
Proposition~\ref{thm:external-known}.

\begin{remark}
\label{rem:published-bound}
The constant in \eqref{eq:Sint} is exactly the constant
$\overline S_{\mathrm{curl}}(\Omega)$ in
\cite[equation~(1.10)]{MederskiSzulkin2021}.
Theorem~\ref{thm:examples-extension} therefore contradicts the convex-domain lower
bound in their Theorem~1.3. The last paragraph of its proof in Section~5
refers to the proof of Lemma~4.1 there, with the integration domain changed
from $\R^3$ to $\Omega$. More precisely, the first inequality in
\cite[equation~(4.6)]{MederskiSzulkin2021} applies the sharp scalar
Sobolev inequality componentwise to a whole-space field. On $\Omega$,
zero tangential trace does not imply that its components belong to
$H_0^1(\Omega)$, or that their zero extensions belong to
$\mathcal D^{1,2}(\R^3)$. Thus that scalar step, with the same constant $S$,
does not follow from the boundary condition used here.

To see the obstruction directly, let
$U_f(s,y)=f(y)e_1$ on $(0,a)\times D$, with $0\ne f\in H_0^1(D)$.
As proved in Lemmas~\ref{lem:admissible} and~\ref{lem:zero-correction},
$U_f\in X_{2,0}(\Omega_a)$ and its optimal intrinsic correction is zero.
It also satisfies $\dive U_f=0$ in $\Omega_a$ and
\[
 \int_{\Omega_a}|\nabla U_f|^2
 =\int_{\Omega_a}|\curl U_f|^2
 =a\int_D|\nabla f|^2.
\]
\[
 \left.(U_f\cdot\nu)\right|_{\{0\}\times D}=-f,
 \qquad
 \left.(U_f\cdot\nu)\right|_{\{a\}\times D}=f,
 \qquad f\not\equiv0.
\]
The zero extension $\widetilde U_f$ has distributional derivative
\[
 \partial_s\widetilde U_f
 =\bigl(\delta_{s=0}-\delta_{s=a}\bigr)\widetilde f(y)e_1,
\]
where $\widetilde f$ is the zero extension from $D$.
This surface distribution is not an $L^2$ function. Hence
$\widetilde U_f\notin\mathcal D^{1,2}(\R^3,\R^3)$, despite the zero
tangential trace and the domain $H^1$ regularity of $U_f$.
Indeed, the scalar-type quotient of this field is
\[
 \frac{\int_{\Omega_a}|\nabla U_f|^2}
 {\bigl(\int_{\Omega_a}|U_f|^6\bigr)^{1/3}}
 =a^{2/3}\frac{\int_D|\nabla f|^2}
 {\bigl(\int_D|f|^6\bigr)^{1/3}}\longrightarrow0.
\]
Thus that step cannot yield the asserted domain-independent intrinsic
lower bound. This correction leaves the whole-space-corrected result
in Proposition~\ref{thm:external-known} unchanged.
\end{remark}

For any fixed cross-section, the same argument gives an explicit
condition on the thickness which ensures that the intrinsic constant
is smaller than $S$.

\Needspace{7\baselineskip}
\begin{proposition}\label{prop:threshold}
Let \(D\subset\mathbb R^2\) be a bounded Lipschitz domain and choose any
\(0\neq f\in H_0^1(D)\).  Put
\[
 Q_6(f;D):=
 \frac{\displaystyle\int_D|\nabla f|^2\,dy}
      {\left(\displaystyle\int_D|f|^6\,dy\right)^{1/3}}.
\]
Then every thickness
\[
 0<a<\left(\frac{S}{Q_6(f;D)}\right)^{3/2}
\]
satisfies \(\Sint((0,a)\times D)<S\).  Consequently, for every fixed
bounded Lipschitz cross-section \(D\), all sufficiently thin finite
cylinders over \(D\) violate the proposed intrinsic lower bound \(S\).
\end{proposition}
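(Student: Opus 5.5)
The plan is to apply the bound \eqref{eq:p-bound} from the proof of Theorem~\ref{thm:main-intro} with $p=2$, $p^*=6$, so that $p/3=2/3$. For the fixed cross-section $D$, the fixed $0\ne f\in H_0^1(D)$ and any $a>0$, Lemmas~\ref{lem:product-exterior}, \ref{lem:admissible} and~\ref{lem:zero-correction} apply to $\Omega_a=(0,a)\times D$. Hence the aligned field $U_f=f(y)e_1$ is admissible in \eqref{eq:p-constant}, has $\curl U_f\neq0$, and has zero optimal intrinsic correction. Then \eqref{eq:p-bound} reads
\[
 \Sint((0,a)\times D)\leq a^{2/3}\,Q_6(f;D).
\]
Before using this, I note that $\curl U_f\ne0$ because $|\curl U_f|=|\nabla_y f|$ and $f\ne0$ in $H_0^1(D)$ forces $\nabla f\not\equiv0$ by Poincar\'e. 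Also $0<Q_6(f;D)<\infty$, since $f\in L^6(D)$ by the two-dimensional embedding and $f\not\equiv 0$.

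The remaining step is elementary. The map $a\mapsto a^{2/3}$ is strictly increasing on $(0,\infty)$, so the hypothesis $0<a<(S/Q_6(f;D))^{3/2}$ is equivalent to $a^{2/3}<S/Q_6(f;D)$. This gives $a^{2/3}Q_6(f;D)<S$, and therefore $\Sint((0,a)\times D)<S$.

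For the final sentence of the statement, fix $D$ and choose any $f$, say a nonzero element of $C_c^\infty(D)$. The threshold $(S/Q_6(f;D))^{3/2}$ is then a positive number depending only on $D$ and $f$, so every cylinder thinner than this threshold has intrinsic constant below $S$. Optionally, taking the infimum over $f$ replaces $Q_6(f;D)$ by $\Lambda_{2,6}(D)$ and gives the threshold $(S/\Lambda_{2,6}(D))^{3/2}$. There is no real obstacle here: the only point needing care is checking that the hypotheses of the earlier lemmas (bounded Lipschitz $D$, $f\in W_0^{1,2}(D)$) are exactly those of the present statement. They are, so the argument is a direct specialization of \eqref{eq:p-bound}.
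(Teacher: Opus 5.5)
Your proposal is correct and follows the paper's own proof: apply \eqref{eq:p-bound} with $p=2$ to get $\Sint((0,a)\times D)\leq a^{2/3}Q_6(f;D)$, then use the hypothesis on $a$ to conclude that this is $<S$. Your added checks, namely that $\curl U_f\neq0$ and $0<Q_6(f;D)<\infty$, together with the optional sharper threshold $(S/\Lambda_{2,6}(D))^{3/2}$, are valid and do not change the argument.
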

\begin{proof}
By \eqref{eq:p-bound} with $p=2$,
\[
 \Sint((0,a)\times D)\leq a^{2/3}Q_6(f;D)<S,
\]
where the last inequality follows from the assumed bound on $a$.
\end{proof}

\Needspace{7\baselineskip}
The dependence on the thickness is a dependence on the shape of the
domain. Indeed, the intrinsic constant is invariant under the
following transformations.
\begin{proposition}\label{prop:scaling}
If \(R\) is orthogonal, \(b\in\mathbb R^3\), and \(t>0\), then
\[
 \Sint\bigl(b+tR\Omega\bigr)=\Sint(\Omega).
\]
In particular, uniform dilation leaves the intrinsic constant unchanged.
\end{proposition}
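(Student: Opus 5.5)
The plan is to transport fields between $\Omega$ and $\Omega'=b+tR\Omega$ by the pullback that preserves the structure of the quotient. For $u$ on $\Omega$ define
\[
 (Tu)(x):=t^{-1/2}\,R\,u\bigl(R^{T}(x-b)/t\bigr),\qquad x\in\Omega' .
\]
With $p=2$ and $p^*=6$, this is the critical scaling. It is composed with the rigid motion $x\mapsto b+Rx$, which acts on vectors by $R$.

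The first step is to record how $T$ acts on the two integrals and on the curl. For $\det R=1$ one has $\curl(R\,v\circ R^{T})=R\,(\curl v)\circ R^{T}$. For $\det R=-1$ the curl picks up a factor $\det R$, that is, $\curl(Rv\circ R^T)=(\det R)R(\curl v)\circ R^T$. In both cases the pointwise modulus is preserved, and only $|\curl|$ enters the quotient. The dilation gives
\[
 \curl(Tu)(x)=t^{-3/2}(\det R)\,R\,(\curl u)\bigl(R^{T}(x-b)/t\bigr).
\]
A change of variables with Jacobian $t^3$ then yields
\[
 \int_{\Omega'}|\curl Tu|^2=\int_\Omega|\curl u|^2,\qquad
 \int_{\Omega'}|Tu|^6=\int_\Omega|u|^6 .
\]
The same identity $\int_{\Omega'}|Tu+Tw|^6=\int_\Omega|u+w|^6$ holds for every $w$, because $T$ is linear.

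The second step, which is the only one requiring any care, is to check that $T$ maps $\Xzero(\Omega)$ bijectively onto $\Xzero(\Omega')$ and maps $\Wker_{2,\Omega}$ bijectively onto $\Wker_{2,\Omega'}$. I would argue directly from the definition via $C_c^\infty$ approximation. $T$ sends $C_c^\infty(\Omega;\R^3)$ onto $C_c^\infty(\Omega';\R^3)$, since the change of variables is a diffeomorphism $\Omega'\to\Omega$. By the first step, $T$ is an isometry for both the $L^6$ norm and the $L^2$ norm of the curl. Hence approximating sequences are carried to approximating sequences, and $T$ maps $\Xzero(\Omega)$ into $\Xzero(\Omega')$. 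The inverse map is of the same form, with parameters $(-t^{-1}R^Tb,\,t^{-1},\,R^T)$, so $T$ is onto. Since $\curl Tw=0$ if and only if $\curl w=0$, the kernels correspond as well. No use of Proposition~\ref{thm:potential-representation} or of the connected-exterior hypothesis is needed.

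Combining the two steps, the inner infimum over $\Wker$ and the numerator are invariant, and $\curl Tu\neq0$ if and only if $\curl u\neq0$. The quotient in \eqref{eq:p-constant} is therefore the same for $u$ and $Tu$, and taking infima over the bijection gives $\Sint(\Omega')=\Sint(\Omega)$. The case $R=I$, $b=0$ is uniform dilation. The identity holds for general $1<p<3$ if the amplitude factor $t^{-1/2}$ is replaced by $t^{-(3-p)/p}$, but only $p=2$ is needed here.
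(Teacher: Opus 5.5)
Your proof is correct and follows the paper's argument. Both transport fields through $x\mapsto b+tR\xi$, acting on vectors by $R$, check that this gives bijections $\Xzero(\Omega)\to\Xzero(\Omega')$ and $\Wker_{2,\Omega}\to\Wker_{2,\Omega'}$, and compare the quotients. The differences are cosmetic: the paper omits your amplitude factor $t^{-1/2}$, so the numerator and denominator each acquire a factor $t$ that cancels, and it gets equality from two inequalities (the second via $T^{-1}$) rather than from a single bijection.
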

\begin{proof}
Set
\begin{equation*}
 \Omega':=b+tR\Omega,
 \qquad
 \xi:=\frac{R^T(x-b)}{t},
 \qquad
 (\mathcal T u)(x):=Ru(\xi).
\end{equation*}
For every orthogonal $R$,
\begin{equation*}
 \curl(\mathcal T u)(x)
 =\frac{\det R}{t}R(\curl u)(\xi),
 \qquad
 \dd x=t^3\,\dd\xi.
\end{equation*}
Thus $\mathcal T$ gives bijections
\begin{equation*}
 \mathcal T:\Xzero(\Omega)\longrightarrow\Xzero(\Omega'),
 \qquad
 \mathcal T:\Wker_{2,\Omega}\longrightarrow\Wker_{2,\Omega'}.
\end{equation*}
For $z=\mathcal T w$,
\begin{align}
 \int_{\Omega'}|\curl(\mathcal T u)|^2\,\dd x
 &=t\int_\Omega|\curl u|^2\,\dd\xi,\notag\\
 \left(\int_{\Omega'}|\mathcal T u+z|^6\,\dd x\right)^{1/3}
 &=t\left(\int_\Omega|u+w|^6\,\dd\xi\right)^{1/3}. \label{eq:rigid-scaling-quotient}
\end{align}
Taking first the infimum over $w$ and then over $u$ in
\eqref{eq:rigid-scaling-quotient} gives
\begin{equation*}
 \Sint(\Omega')\leq\Sint(\Omega).
\end{equation*}
The coordinate map $T(\xi):=b+tR\xi$ has inverse
\begin{equation*}
 T^{-1}(x)=\frac1tR^T(x-b),
 \qquad
 T^{-1}(\Omega')=\Omega.
\end{equation*}
Applying the preceding calculation to $T^{-1}:\Omega'\to\Omega$ gives
\begin{equation*}
 \Sint(\Omega)\leq\Sint(\Omega').
\end{equation*}
Together with the opposite inequality, this proves
\begin{equation*}
 \Sint(\Omega')=\Sint(\Omega).
\end{equation*}
\end{proof}

\section{Proof of Theorem 3}\label{sec:proof-two}

In this section, we prove Theorem~\ref{thm:main-conclusions}.
We first recall the analytic facts needed for the Helmholtz
decomposition and Newtonian representation. We then exclude two
special forms of solutions.

\subsection{Analytic preliminaries}\label{sec:preliminaries}

We collect the standard analytic facts used below. All differential
identities are understood distributionally. Define the curl-free space
\begin{equation*}
 \G_6:=\{g\in W^{6,2}(\curl;\R^3):\curl g=0\}.
\end{equation*}

\Needspace{7\baselineskip}
We first recall the $L^p$ Helmholtz decomposition.
\begin{lemma}
\label{lem:helmholtz-lp}
For \(1<p<\infty\), there is a bounded projection \(P\) on
\(L^p(\R^3,\R^3)\) such that every \(u\in L^p\) has the unique decomposition
\begin{equation}\label{eq:hodge}
 u=v+g,\qquad v=Pu,\qquad g=(I-P)u,\qquad
 \dive v=0,\qquad \curl g=0.
\end{equation}
Moreover, \(\norm v_p+\norm g_p\leq C_p\norm u_p\).
For \(p=6\), if \(u\in W^{6,2}(\curl;\R^3)\), then \(g\in\G_6\) and
\(g=\nabla\varphi\) for some \(\varphi\in W_{\mathrm{loc}}^{1,6}(\R^3)\),
unique up to a constant.
\end{lemma}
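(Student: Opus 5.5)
The plan is to build $P$ from Riesz transforms and then handle the extra $p=6$ claims separately. On $L^p(\R^3,\R^3)$, $1<p<\infty$, set
\[
 (I-P)u:=\nabla\Delta^{-1}\dive u=-R(R\cdot u),\qquad
 \bigl((I-P)u\bigr)_j=-\sum_{k}R_jR_ku_k,
\]
where $R_j$ are the Riesz transforms with symbol $-i\xi_j/|\xi|$. In Fourier variables $I-P$ is multiplication by the matrix $\xi\xi^T/|\xi|^2$. This matrix is homogeneous of degree zero and smooth off the origin. By the Mikhlin (or Calder\'on--Zygmund) multiplier theorem, $P$ and $I-P$ are bounded on $L^p$, which gives $\norm v_p+\norm g_p\leq C_p\norm u_p$.

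Next I would check the algebraic properties on the Schwartz class and extend them by density and continuity.
\begin{itemize}
\item The matrix $\xi\xi^T/|\xi|^2$ is idempotent, so $P$ is a projection.
\item Since $\xi\times(\xi\xi^T\hat u)=0$, we get $\curl g=0$.
\item Since $\xi\cdot(I-\xi\xi^T/|\xi|^2)\hat u=0$, we get $\dive v=0$.
\end{itemize}
Distributional identities pass to $L^p$ limits, so these hold for all $u\in L^p$.

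For uniqueness, suppose $0=v'+g'$ with $v',g'\in L^p$, $\dive v'=0$ and $\curl g'=0$. Then $h:=v'=-g'$ is both divergence- and curl-free, so every component is harmonic, because $\Delta h=\nabla\dive h-\curl\curl h=0$ in $\mathcal D'$. A harmonic function in $L^p(\R^3)$ with $p<\infty$ vanishes, by the mean value property over large balls and H\"older's inequality. Hence $h=0$.

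Now take $p=6$ and $u\in W^{6,2}(\curl;\R^3)$. Since $\curl g=0$ and $g\in L^6$, we have $g\in W^{6,2}(\curl;\R^3)$ with $\curl g=0$, that is, $g\in\G_6$. By the distributional Poincar\'e lemma on the simply connected space $\R^3$, already used in Proposition~\ref{thm:potential-representation}, there is $\varphi\in\mathcal D'(\R^3)$ with $\nabla\varphi=g$.
\begin{itemize}
\item Because $\nabla\varphi\in L^6_{\mathrm{loc}}$, a distribution with locally integrable gradient is locally integrable, and a Poincar\'e inequality on balls gives $\varphi\in W^{1,6}_{\mathrm{loc}}$.
\item If $\nabla\varphi_1=\nabla\varphi_2$, the difference is a distribution with zero gradient on a connected set, hence a constant. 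This gives uniqueness up to a constant.
\end{itemize}
One can also write $\varphi=\Delta^{-1}\dive u$ directly, but the local Poincar\'e route avoids decay issues.

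The main obstacle is the $L^p$ boundedness of the Riesz-type multiplier. I would simply cite the classical Calder\'on--Zygmund theory for it. The remaining delicate point is making sure that the uniqueness argument does not need any decay beyond $L^p$ integrability, and the Liouville argument above handles that.
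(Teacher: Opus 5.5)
Your proposal is correct and takes essentially the same route as the paper. Both build $P$ from Riesz transforms (symbol $\delta_{ij}-\xi_i\xi_j/|\xi|^2$), read off idempotence, $\dive v=0$ and $\curl g=0$ from the symbol identities, get uniqueness because an $L^p$ harmonic field vanishes by the mean-value estimate, and obtain $g=\nabla\varphi$ from the distributional Poincar\'e lemma. Your step of verifying the identities on Schwartz fields and extending to $L^p$ by density is a slightly more careful version of the paper's direct appeal to the symbol identities.
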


\begin{proof}
For $\xi\neq0$, set
\begin{equation*}
 P_{ij}(\xi):=\delta_{ij}-\frac{\xi_i\xi_j}{|\xi|^2}.
\end{equation*}
Since $P_{ij}=\delta_{ij}+R_iR_j$, boundedness of the Riesz transforms
on $L^p$, $1<p<\infty$, gives
\begin{equation*}
 \|Pu\|_p+\|(I-P)u\|_p\leq C_p\|u\|_p.
\end{equation*}
The symbol identities
\begin{equation*}
 P(\xi)^2=P(\xi),
 \qquad
 \xi\cdot P(\xi)a=0,
 \qquad
 \xi\times(I-P(\xi))a=0
\end{equation*}
imply, for
\begin{equation*}
 v:=Pu,
 \qquad
 g:=(I-P)u,
\end{equation*}
that
\begin{equation*}
 u=v+g,
 \qquad
 \dive v=0,
 \qquad
 \curl g=0
\end{equation*}
in distributions.

If $u=v_1+g_1=v_2+g_2$ are two such decompositions, then
\begin{equation*}
 h:=v_1-v_2=g_2-g_1,
 \qquad
 \dive h=0,
 \qquad
 \curl h=0.
\end{equation*}
Hence
\begin{equation*}
 \Delta h
 =\nabla(\dive h)-\curl(\curl h)
 =0,
 \qquad h\in L^p(\R^3),
\end{equation*}
and the mean-value estimate gives $h=0$.  This proves uniqueness.
For $p=6$, $g\in L^6$ and $\curl g=0$, so
\begin{equation*}
 g\in\G_6.
\end{equation*}
The distributional Poincar\'e lemma yields
\begin{equation*}
 g=\nabla\varphi,
 \qquad
 \varphi\in W_{\mathrm{loc}}^{1,6}(\R^3),
\end{equation*}
with $\varphi$ unique modulo constants.
\end{proof}

\Needspace{7\baselineskip}
The curl-free space has the following closedness property.
\begin{lemma}\label{lem:G6-closed}
The space \(\G_6\) is a closed linear subspace of \(L^6(\R^3,\R^3)\).
On \(\G_6\), the \(L^6\) and \(W^{6,2}(\curl;\R^3)\) graph norms coincide.
\end{lemma}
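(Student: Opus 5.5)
The statement has two parts: $\G_6$ is a linear subspace of $L^6(\R^3,\R^3)$ that is closed, and on $\G_6$ the two norms agree. Linearity is immediate, because the distributional curl is linear and $W^{6,2}(\curl;\R^3)$ is a vector space.

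For closedness, I will argue as in the proof of Proposition~\ref{prop:projection}. Take $g_n\in\G_6$ with $g_n\to g$ in $L^6(\R^3,\R^3)$, and fix a test field $\phi\in C_c^\infty(\R^3,\R^3)$. By definition of the distributional curl,
\[
 \langle \curl g,\phi\rangle=\int_{\R^3} g\cdot\curl\phi\dd x
 =\lim_{n\to\infty}\int_{\R^3} g_n\cdot\curl\phi\dd x=0 .
\]
The passage to the limit is justified by H\"older's inequality: $\curl\phi\in L^{6/5}$ and $g_n\to g$ in $L^6$. Hence $\curl g=0$ in $\mathcal D'$.

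The curl of $g$ is then the zero function, which lies in $L^2(\R^3,\R^3)$. So $g\in W^{6,2}(\curl;\R^3)$ with $\curl g=0$, that is, $g\in\G_6$. This proves that $\G_6$ is closed in $L^6$.

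For the norm identity, the graph norm is $\|g\|_{L^6}+\|\curl g\|_{L^2}$. For $g\in\G_6$ the second term vanishes, so the graph norm equals $\|g\|_{L^6}$.

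A consequence worth noting is that $L^6$-convergence in $\G_6$ is the same as convergence in $W^{6,2}(\curl;\R^3)$. This will allow Lemma~\ref{lem:strict-convex-projection} to be applied later with $Z=\G_6$ and exponent $6$.

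No step is a genuine obstacle. The only point needing care is that the limit field is shown to lie in $W^{6,2}(\curl;\R^3)$ itself, and not merely to be curl-free in distributions. This holds because the zero distribution is an $L^2$ function.
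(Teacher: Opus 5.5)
Your proof is correct and follows essentially the same route as the paper: you pass to the limit in $\int g_n\cdot\curl\phi\dd x$ to get $\curl g=0$ in $\mathcal D'$, and you then note that the $L^2$ curl term vanishes in the graph norm. Your remark that the zero distribution is an $L^2$ function, so that $g\in W^{6,2}(\curl;\R^3)$, makes explicit a step the paper leaves implicit.
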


\begin{proof}
If \(\{g_n\}\subset\G_6\) and \(g_n\to g\) in \(L^6\), testing against
\(\curl\phi\), \(\phi\in C_c^\infty(\R^3,\R^3)\), gives
\begin{align*}
 \langle\curl g,\phi\rangle
 &=\int_{\R^3}g\cdot\curl\phi\,\dd x\\
 &=\lim_{n\to\infty}
   \int_{\R^3}g_n\cdot\curl\phi\,\dd x
 =\lim_{n\to\infty}\langle\curl g_n,\phi\rangle
 =0.
\end{align*}
Thus $g\in\G_6$.  Moreover, for every $g\in\G_6$,
\begin{equation*}
 \|g\|_{W^{6,2}(\curl;\R^3)}
 =\|g\|_6+\|\curl g\|_2
 =\|g\|_6.
\end{equation*}
\end{proof}

\Needspace{7\baselineskip}
The following identity reduces the equation for the solenoidal
component to a Poisson equation.
\begin{lemma}
\label{lem:curl-curl-identity}
For every vector-valued distribution \(w\) on \(\R^3\),
\begin{equation*}
 \curl(\curl w)=\nabla(\dive w)-\Delta w.
\end{equation*}
Consequently, if \(\dive v=0\) and \(\curl g=0\), then
\(\curl(\curl(v+g))=-\Delta v\).
\end{lemma}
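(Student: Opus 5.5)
The identity is purely algebraic, so I will verify it on test functions using the componentwise definition of distributional derivatives. For a vector-valued distribution $w=(w_1,w_2,w_3)$, the operators act componentwise. In index notation, $(\curl w)_i=\epsilon_{ijk}\partial_j w_k$, so
\[
 (\curl(\curl w))_i=\epsilon_{ijk}\partial_j\epsilon_{klm}\partial_l w_m
 =\epsilon_{ijk}\epsilon_{klm}\partial_j\partial_l w_m .
\]
The key ingredient is the contraction identity
\[
 \epsilon_{ijk}\epsilon_{klm}=\delta_{il}\delta_{jm}-\delta_{im}\delta_{jl}.
\]
Applying it gives
\[
 (\curl(\curl w))_i=\partial_j\partial_i w_j-\partial_j\partial_j w_i
 =\partial_i(\dive w)-\Delta w_i .
\]

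The only analytic point is that distributional derivatives commute: $\partial_j\partial_l T=\partial_l\partial_j T$ for every distribution $T$. This holds because, when tested against $\phi\in C_c^\infty$, both sides equal $\langle T,\partial_l\partial_j\phi\rangle$, and mixed partials of smooth $\phi$ commute. Equivalently, one can pair $\curl(\curl w)$ with $\phi$, move all derivatives onto $\phi$, and use the classical identity for the smooth field $\phi$. Since $\curl$ is formally self-adjoint, $\langle\curl\curl w,\phi\rangle=\langle w,\curl\curl\phi\rangle$, and the smooth identity then transfers back.

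For the consequence, I take $w=v+g$ and use linearity. Because $\dive v=0$, the identity gives $\curl\curl v=-\Delta v$. Because $\curl g=0$, we have $\curl\curl g=\curl 0=0$. Adding the two yields $\curl\curl(v+g)=-\Delta v$.

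There is no real obstacle here. The only care needed is to justify the commutation of derivatives for distributions, and possibly to note that no integrability of $w$ is required.
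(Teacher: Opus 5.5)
Your proposal is correct and follows the same route as the paper: the Levi--Civita contraction $\varepsilon_{ijk}\varepsilon_{k\ell m}=\delta_{i\ell}\delta_{jm}-\delta_{im}\delta_{j\ell}$ gives the identity componentwise, and the commutation of distributional derivatives justifies it in $\mathcal D'$. Linearity together with $\dive v=0$ and $\curl g=0$ then gives the consequence $\curl(\curl(v+g))=-\Delta v$.
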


\begin{proof}
Componentwise, contraction of the Levi--Civita symbols gives
\begin{align*}
 [\curl(\curl w)]_i
 &=\eps_{ijk}\partial_j
   (\eps_{k\ell m}\partial_\ell w_m)\\
 &=(\delta_{i\ell}\delta_{jm}-\delta_{im}\delta_{j\ell})
   \partial_j\partial_\ell w_m\\
 &=\partial_i\partial_jw_j-\partial_j\partial_jw_i
 =[\nabla(\dive w)-\Delta w]_i.
\end{align*}
Distributional derivatives commute, so the calculation is valid in
$\mathcal D'(\R^3)$.  Therefore, if $w=v+g$ with
\begin{equation*}
 \dive v=0,
 \qquad
 \curl g=0,
\end{equation*}
then
\begin{equation*}
 \curl(\curl(v+g))
 =\curl(\curl v)
 =-\Delta v.
\end{equation*}
\end{proof}

\Needspace{7\baselineskip}
We shall use the following interior estimate for the Poisson equation.
\begin{lemma}
\label{lem:local-poisson}
Let \(1<p<\infty\), let \(f,w\in L_{\mathrm{loc}}^p(\R^3)\), and suppose
that \(-\Delta w=f\). For concentric balls \(B_r\Subset B_R\),
\begin{equation*}
 \norm{w}_{W^{2,p}(B_r)}
 \leq C_{p,r,R}
 \bigl(\norm f_{L^p(B_R)}+\norm w_{L^p(B_R)}\bigr).
\end{equation*}
In particular,
\(W_{\mathrm{loc}}^{2,6/5}\hookrightarrow
W_{\mathrm{loc}}^{1,2}\hookrightarrow L_{\mathrm{loc}}^6\).
\end{lemma}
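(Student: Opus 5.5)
The plan is to prove the estimate by a standard cutoff-and-interpolation argument. This reduces it to the classical Calderón--Zygmund bound for the Newtonian potential of a compactly supported source, with the harmonic remainder handled by mean-value estimates.

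Fix intermediate radii $r<\rho<R$ and a cutoff $\chi\in C_c^\infty(B_R)$ with $\chi\equiv1$ on $B_\rho$. Set $\tilde w=\chi w$. Distributionally,
\[
 -\Delta\tilde w=\chi f-2\nabla\chi\cdot\nabla w-w\Delta\chi .
\]
The term $\nabla\chi\cdot\nabla w$ is a priori only a distribution of order one, so it must be handled with care.

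To avoid needing $\nabla w$ in advance, I would instead split $w=N*(\chi f)+h$ on $B_\rho$.

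\textbf{Step 1.} The function $N*(\chi f)$ lies in $W^{2,p}_{\mathrm{loc}}$. By the Calderón--Zygmund theorem (boundedness of $\partial_i\partial_j N*$ on $L^p$, which is the same Riesz transform fact already used in Lemma~\ref{lem:helmholtz-lp}) and Young's inequality for the lower-order terms on bounded sets, it satisfies
\[
 \|N*(\chi f)\|_{W^{2,p}(B_\rho)}\le C\|f\|_{L^p(B_R)} .
\]

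\textbf{Step 2.} The remainder $h=w-N*(\chi f)$ satisfies $\Delta h=0$ in $\mathcal D'(B_\rho)$. By Weyl's lemma, $h$ is smooth and harmonic there. The mean-value property on balls of radius $(\rho-r)/2$ then gives
\[
 \|h\|_{C^2(\overline B_r)}\le C\|h\|_{L^1(B_\rho)}\le C\bigl(\|w\|_{L^p(B_R)}+\|f\|_{L^p(B_R)}\bigr).
\]
Here we use Hölder's inequality together with the $L^p(B_\rho)$ bound on $N*(\chi f)$ from Step~1.

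\textbf{Step 3.} Adding the two pieces yields the stated $W^{2,p}(B_r)$ estimate, with a constant depending only on $p,r,R$.

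For the embedding statement, the Sobolev embedding on balls (which are Lipschitz) gives $W^{2,6/5}(B_r)\hookrightarrow W^{1,2}(B_r)$, since $(6/5)^*=2$ in three dimensions. It then gives $W^{1,2}(B_r)\hookrightarrow L^6(B_r)$, since $2^*=6$. Applying this on every ball yields the local embeddings.

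The main obstacle is Step~1, the Calderón--Zygmund second-derivative bound. I would simply invoke it as the classical singular-integral theorem rather than reprove it. The only delicate point is identifying the distributional second derivatives of $N*(\chi f)$ with the principal-value singular integral plus $-\frac13\delta_{ij}\chi f$. This is standard: approximate $\chi f$ by smooth functions and pass to the limit using $L^p$ boundedness.
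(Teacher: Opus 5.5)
Your argument is correct, but it is an actual proof where the paper gives only a citation. The paper invokes the interior Calder\'on--Zygmund estimate as a black box. It then does the same exponent arithmetic ($1/(6/5)-1/3=1/2$, $1/2-1/3=1/6$) for the embeddings that you do. You instead reduce the interior estimate to the whole-space bound $\|D^2(N*g)\|_{L^p(\R^3)}\le C\|g\|_{L^p(\R^3)}$ for the compactly supported source $g=\chi f$, with Young's inequality for the lower-order terms. You then handle the remainder $h=w-N*(\chi f)$, which is harmonic in $B_\rho$ because $\chi\equiv1$ there, by Weyl's lemma and the interior derivative bounds $\|h\|_{C^2(\overline B_r)}\le C\|h\|_{L^1(B_\rho)}$. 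The advantage of your route is that regularity and estimate come out together from the bare hypotheses $w\in L^p_{\mathrm{loc}}$ and $-\Delta w=f$ in $\mathcal D'$. This is exactly the situation in Proposition~\ref{prop:coupled}, where $v$ is only known to lie in $L^6$. The interior estimate as usually stated is an a priori bound for functions already in $W^{2,p}_{\mathrm{loc}}$. The bare citation therefore tacitly needs a mollification step, or a splitting like yours, to reach merely distributional solutions. The paper's route is shorter; yours makes that step explicit and rests only on the Riesz-transform fact already behind Lemma~\ref{lem:helmholtz-lp}. You were also right to drop the first attempt with $\chi w$, since $\nabla\chi\cdot\nabla w$ is not yet known to be a function.
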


\begin{proof}
The estimate is the interior Calder\'on--Zygmund estimate
\cite{Evans2010,Stein1970}. The stated embeddings follow from
the Sobolev embedding theorem, since
$1/(6/5)-1/3=1/2$ and $1/2-1/3=1/6$.
\end{proof}

\Needspace{7\baselineskip}
For estimates on the whole space, we recall the
Hardy--Littlewood--Sobolev inequality.
\begin{lemma}\label{lem:HLS}
Let \(n\geq1\), \(0<\alpha<n\), \(1<p<n/\alpha\), and
\(1/q=1/p-\alpha/n\), equivalently \(q=np/(n-\alpha p)\).
Then the Riesz potential satisfies
\begin{align}
 I_\alpha f(x)
 &:=\int_{\R^n}\frac{f(y)}{|x-y|^{n-\alpha}}\dd y,
 \notag\\
 \norm{I_\alpha f}_{L^q(\R^n)}
 &\leq C(n,\alpha,p)\norm f_{L^p(\R^n)}.
 \label{eq:HLS-inequality}
\end{align}
The estimate also holds for vector-valued \(f\), with Euclidean norms.
\end{lemma}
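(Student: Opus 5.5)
This is the classical Hardy--Littlewood--Sobolev inequality, which the paper quotes as a standard tool. I would give the standard proof via Hedberg's pointwise estimate combined with the Hardy--Littlewood maximal theorem, and then reduce the vector-valued case to the scalar one.

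\textbf{Step 1 (reduction to nonnegative scalar $f$).} Since $|I_\alpha f(x)|\leq I_\alpha|f|(x)$ pointwise, with $|f|$ the Euclidean norm when $f$ is vector-valued, it suffices to treat $f\geq0$ with $\norm f_{L^p}=1$. The vector-valued statement then follows at once, with the same constant.

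\textbf{Step 2 (splitting the kernel).} Fix $x$ and a radius $\delta>0$, to be chosen depending on $x$, and write $I_\alpha f(x)=A+B$.

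\emph{Near part.} Here $A$ is the integral over $|x-y|<\delta$. Decomposing the ball into dyadic shells $2^{-k-1}\delta\leq|x-y|<2^{-k}\delta$ gives
\[
A\leq C\sum_{k\ge0}(2^{-k}\delta)^{\alpha}\,Mf(x)=C\delta^\alpha Mf(x),
\]
where $Mf$ is the Hardy--Littlewood maximal function. The geometric series converges because $\alpha>0$.

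\emph{Far part.} Here $B$ is the integral over $|x-y|\geq\delta$. H\"older's inequality gives
\[
B\leq \norm f_{p}\Bigl(\int_{|z|\ge\delta}|z|^{-(n-\alpha)p'}\dd z\Bigr)^{1/p'}=C\delta^{\alpha-n/p}.
\]
This integral is finite exactly when $(n-\alpha)p'>n$, which is equivalent to $p<n/\alpha$. This is where that hypothesis enters.

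\textbf{Step 3 (optimizing $\delta$).} Choose $\delta=Mf(x)^{-p/n}$ when $0<Mf(x)<\infty$; the degenerate cases are trivial. Both terms then equal $C\,Mf(x)^{1-\alpha p/n}$. Since $1-\alpha p/n=p/q$, this yields Hedberg's inequality
\[
I_\alpha f(x)\leq C\,Mf(x)^{p/q}.
\]

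\textbf{Step 4 (conclusion).} Raise Hedberg's inequality to the power $q$ and integrate:
\[
\norm{I_\alpha f}_q^q\leq C\int (Mf)^p\leq C\norm f_p^p=C.
\]
The middle step uses the $L^p$ boundedness of $M$ for $p>1$, which is the point where $p>1$ is needed. Homogeneity then removes the normalization $\norm f_{L^p}=1$ and gives \eqref{eq:HLS-inequality}.

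The main obstacle is the far-part estimate, since it is what forces the restriction $p<n/\alpha$. It also has to balance exactly against the maximal-function term to produce the exponent $p/q$. Everything else rests on standard results, namely the maximal theorem, which I would cite from \cite{Stein1970}.
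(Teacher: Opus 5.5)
Your proof is correct. The reduction of the vector-valued case via $|I_\alpha f|\leq I_\alpha(|f|)$ is exactly the paper's argument. The difference is in the scalar case: the paper cites it from Lieb--Loss and Stein, while you prove it from Hedberg's pointwise estimate and the Hardy--Littlewood maximal theorem.

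Your exponent bookkeeping checks out:
\begin{itemize}
\item The dyadic shells give $C\delta^\alpha Mf(x)$, and the series converges because $\alpha>0$.
\item The far integral is finite exactly when $(n-\alpha)p'>n$, which is equivalent to $p<n/\alpha$.
\item The choice $\delta=Mf(x)^{-p/n}$ balances both pieces at $Mf(x)^{1-\alpha p/n}=Mf(x)^{p/q}$.
\item The degenerate cases are harmless: $Mf>0$ everywhere when $f\geq0$ and $f\not\equiv0$, and $Mf<\infty$ almost everywhere.
\end{itemize}

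What your route buys is a self-contained proof, modulo the maximal theorem, that shows exactly where $p>1$ and $p<n/\alpha$ are used. It also gives almost-everywhere absolute convergence of the integral defining $I_\alpha f$, which the paper uses tacitly. The citation buys brevity. Lemma~\ref{lem:newton-estimates} only needs some finite constant $C(n,\alpha,p)$, so nothing sharper than your argument is required.
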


\begin{proof}
For scalar $f$, \eqref{eq:HLS-inequality} is the classical
Hardy--Littlewood--Sobolev estimate \cite{LiebLoss,Stein1970}.  If
$f=(f_1,\dots,f_m)$, then pointwise
\begin{align*}
 |I_\alpha f(x)|
 &=\left|
   \int_{\R^n}\frac{f(y)}{|x-y|^{n-\alpha}}\,\dd y
   \right|\\
 &\leq
 \int_{\R^n}\frac{|f(y)|}{|x-y|^{n-\alpha}}\,\dd y
 =I_\alpha(|f|)(x).
\end{align*}
Therefore,
\begin{equation*}
 \|I_\alpha f\|_q
 \leq\|I_\alpha(|f|)\|_q
 \leq C(n,\alpha,p)\|f\|_p.
\end{equation*}
\end{proof}

\Needspace{7\baselineskip}
Applying Lemma~\ref{lem:HLS} to the Newtonian kernel and its
gradient gives the estimates below.
\begin{lemma}\label{lem:newton-estimates}
Let \(N(x)=(4\pi|x|)^{-1}\). If \(F\in L^{6/5}(\R^3,\R^3)\), then
\(V=N*F\) satisfies \(-\Delta V=F\) and
\begin{equation*}
 \norm V_6+\norm{\nabla V}_2\leq C\norm F_{6/5}.
\end{equation*}
Consequently, \(\norm{\curl V}_2\leq C\norm F_{6/5}\).
\end{lemma}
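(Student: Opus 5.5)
The plan is to write $V=N*F$ componentwise and reduce each estimate to the Hardy--Littlewood--Sobolev inequality of Lemma~\ref{lem:HLS} with $n=3$. For the $L^6$ bound, note that $V=\frac{1}{4\pi}I_2F$ with $\alpha=2$. Taking $p=6/5<3/2=n/\alpha$ gives $1/q=5/6-2/3=1/6$, so $q=6$ and $\norm V_6\leq C\norm F_{6/5}$. The same bound shows that the integral defining $V$ converges absolutely for almost every $x$. Indeed, $I_2(|F|)\in L^6$ is finite almost everywhere, so $V$ is a well-defined function in $L^6$.

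For the gradient, I would use the pointwise bound $|\nabla N(x)|=(4\pi|x|^2)^{-1}$, so that formally $|\nabla V|\leq\frac1{4\pi}I_1(|F|)$. Applying Lemma~\ref{lem:HLS} with $\alpha=1$ and $p=6/5<3$ gives $1/q=5/6-1/3=1/2$. Hence $\norm{I_1|F|}_2\leq C\norm F_{6/5}$.

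The main obstacle is justifying that the distributional gradient of $V$ equals the convolution $(\nabla N)*F$, and that $-\Delta V=F$ in $\mathcal D'$. I would do this by density. Take $F_k\in C_c^\infty$ with $F_k\to F$ in $L^{6/5}$. For smooth compactly supported $F_k$, the classical Newtonian potential theory gives $V_k=N*F_k\in C^\infty$, with $\nabla V_k=(\nabla N)*F_k$ and $-\Delta V_k=F_k$. By the two HLS estimates applied to $F_k-F$, we get $V_k\to V$ in $L^6$ and $(\nabla N)*F_k\to(\nabla N)*F$ in $L^2$. Passing to the limit in the distributional pairings then identifies $\nabla V=(\nabla N)*F\in L^2$. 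The same limit applied to $\int V_k(-\Delta\phi)=\int F_k\phi$ yields $-\Delta V=F$.

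Finally, each component of $\curl V$ is a difference of two first partial derivatives of components of $V$. Therefore $|\curl V|\leq\sqrt2\,|\nabla V|$ pointwise, or more crudely $|\curl V|\leq C|\nabla V|$. This gives $\norm{\curl V}_2\leq C\norm F_{6/5}$.
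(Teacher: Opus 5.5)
Your proposal is correct and follows the paper's proof essentially step for step: you write $V=\frac1{4\pi}I_2F$, use $|\nabla V|\leq CI_1(|F|)$, apply the Hardy--Littlewood--Sobolev estimates with $p=6/5$, pass to the limit from approximations $F_k\in C_c^\infty$ to get $-\Delta V=F$, and finish with the pointwise bound $|\curl V|\leq C|\nabla V|$. Your density step is slightly more careful than the paper's, because you explicitly identify the distributional gradient of $V$ with $(\nabla N)*F$ before using the $I_1$ bound, whereas the paper uses this identification implicitly.
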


\begin{proof}
Since
\begin{equation*}
 V=\frac1{4\pi}I_2F,
 \qquad
 |\nabla V|\leq CI_1(|F|),
\end{equation*}
Lemma~\ref{lem:HLS}, with $p=6/5$, gives
\begin{align*}
 \frac16&=\frac{5}{6}-\frac23
 &&\Longrightarrow&
 \|V\|_6&\leq C\|F\|_{6/5},\\
 \frac12&=\frac{5}{6}-\frac13
 &&\Longrightarrow&
 \|\nabla V\|_2&\leq C\|F\|_{6/5}.
\end{align*}
For a sequence $\{F_n\}\subset C_c^\infty$ with
$F_n\to F$ in $L^{6/5}$, define the sequence $\{V_n\}$ by
\begin{equation*}
 V_n:=N*F_n.
\end{equation*}
The preceding estimates imply
\begin{equation*}
 V_n\longrightarrow V\quad\text{in }L^6,
 \qquad
 \nabla V_n\longrightarrow\nabla V\quad\text{in }L^2.
\end{equation*}
Since $-\Delta V_n=F_n$, passage to distributions yields
\begin{equation*}
 -\Delta V=F.
\end{equation*}
Finally,
\begin{equation*}
 \|\curl V\|_2\leq C\|\nabla V\|_2
 \leq C\|F\|_{6/5}.
\end{equation*}
\end{proof}

\Needspace{7\baselineskip}
To identify the Newtonian potential with the solenoidal component,
we use the following uniqueness result.
\begin{lemma}
\label{lem:harmonic-Lp}
If \(1\leq p<\infty\), \(w\in L^p(\R^3,\R^m)\), and \(\Delta w=0\),
then \(w=0\).
\end{lemma}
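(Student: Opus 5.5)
The natural route is the mean-value argument for harmonic functions; the only technical point is that $w$ is at first just a distribution. Work componentwise, so it suffices to treat scalar $w\in L^p(\R^3)$ with $\Delta w=0$ in $\mathcal D'(\R^3)$.

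The first step is regularity. By Weyl's lemma, a distributional solution of $\Delta w=0$ agrees a.e.\ with a smooth harmonic function. Concretely, take a radial mollifier $\rho_\eps$. Then $w_\eps=w*\rho_\eps$ is smooth and satisfies $\Delta w_\eps=0$ classically. Since a harmonic function equals its own radial averages, $w_\eps*\rho_\delta=w_\eps$. Commuting the convolutions gives $w_\eps=w_\delta$, so all mollifications coincide. Letting $\eps\to0$ in $L^1_{\mathrm{loc}}$ shows that $w$ equals this smooth harmonic function a.e. Alternatively, Lemma~\ref{lem:local-poisson} with $f=0$ can be bootstrapped to get $w\in W^{2,p}_{\mathrm{loc}}$, and hence smoothness.

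The second step is the mean-value bound. For fixed $x$ and any $R>0$, the mean-value property and H\"older's inequality give
\[
 |w(x)|=\Bigl|\frac{1}{|B_R|}\int_{B_R(x)}w\,\dd y\Bigr|
 \le |B_R(x)|^{-1/p}\norm{w}_{L^p(\R^3)}
 = C R^{-3/p}\norm{w}_{L^p(\R^3)}.
\]
Since $p<\infty$, the right side tends to $0$ as $R\to\infty$. Hence $w(x)=0$ for every $x$, so $w=0$. The case $p=1$ is included, because H\"older's inequality is used only with exponent $p\ge1$.

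The main, though minor, obstacle is the justification in the first step that a distributional harmonic function in $L^p$ is smooth. The mollifier argument handles this without appeal to external results. Everything after that is immediate, and the vector-valued case follows by applying the scalar argument to each component.
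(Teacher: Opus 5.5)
Your proposal is correct and matches the paper's proof: Weyl's lemma gives smoothness, and then the mean-value property with H\"older gives $|w(x)|\le CR^{-3/p}\norm{w}_{L^p(\R^3)}\to0$. The only addition is your self-contained mollifier proof of Weyl's lemma, which the paper simply cites; your alternative via Lemma~\ref{lem:local-poisson} would need $p>1$, whereas the mollifier route also covers $p=1$.
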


\begin{proof}
By Weyl's lemma, every component $w_i$ is smooth and harmonic.  For
$x\in\R^3$ and $R>0$, the mean-value property and H\"older's inequality
give
\begin{align*}
 |w_i(x)|
 &\leq\frac1{|B_R|}\int_{B_R(x)}|w_i(y)|\,\dd y\\
 &\leq |B_R|^{-1/p}\|w_i\|_{L^p(B_R(x))}\\
 &\leq C R^{-3/p}\|w_i\|_{L^p(\R^3)}.
\end{align*}
Letting $R\to\infty$ gives $w_i(x)=0$ for every $i$ and $x$.
\end{proof}

\Needspace{7\baselineskip}
We also need a density result to use finite-energy fields as test
functions in the weak equation.
\begin{lemma}\label{lem:graph-density}
The space \(C_c^\infty(\R^3,\R^3)\) is dense in
\(W^{6,2}(\curl;\R^3)\) with respect to its graph norm.
\end{lemma}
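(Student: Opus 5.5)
To prove Lemma~\ref{lem:graph-density}, I would use the standard truncation--mollification scheme, arranged so that both the $L^6$ norm of the field and the $L^2$ norm of its curl are controlled. Fix $u\in W^{6,2}(\curl;\R^3)$.

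\textbf{Truncation.} Choose a cutoff $\chi\in C_c^\infty(\R^3)$ with $0\le\chi\le1$, $\chi=1$ on $B_1$ and $\operatorname{supp}\chi\subset B_2$, and set $\chi_R(x)=\chi(x/R)$. By the product rule,
\[
 \curl(\chi_Ru)=\chi_R\curl u+\nabla\chi_R\times u .
\]
Dominated convergence gives $\chi_Ru\to u$ in $L^6$ and $\chi_R\curl u\to\curl u$ in $L^2$. The only delicate term is the commutator $\nabla\chi_R\times u$, and it is where the critical exponent pairing enters. Since $|\nabla\chi_R|\le C/R$ is supported in the annulus $A_R=B_{2R}\setminus B_R$, H\"older's inequality with exponents $3$ and $3/2$ gives
\[
 \|\nabla\chi_R\times u\|_{L^2}
 \le \|\nabla\chi_R\|_{L^3}\,\|u\|_{L^6(A_R)}
 \le C\,\|u\|_{L^6(A_R)} .
\]
Here $\|\nabla\chi_R\|_{L^3}^3\le C R^{-3}\,|B_{2R}|\le C$ is bounded independently of $R$, exactly because $1/2=1/3+1/6$. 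Since $u\in L^6$, the tail $\|u\|_{L^6(A_R)}$ tends to $0$ as $R\to\infty$. Hence compactly supported elements of $W^{6,2}(\curl;\R^3)$ are dense.

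\textbf{Mollification.} For a compactly supported $u$ in the space, I would take a standard mollifier $\rho_\eps$ and set $u_\eps=\rho_\eps*u\in C_c^\infty$. Convolution commutes with distributional derivatives, so $\curl u_\eps=\rho_\eps*\curl u$. The usual approximation-of-identity property then gives $u_\eps\to u$ in $L^6$ and $\curl u_\eps\to\curl u$ in $L^2$. Combining the two steps through a diagonal choice of $R$ and $\eps$ yields density in the graph norm.

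The main obstacle is the commutator term $\nabla\chi_R\times u$, since $u$ is not assumed to lie in $L^2$. The bound must therefore be taken from the $L^6$ tail of $u$ rather than from an $L^2$ bound, and the scale invariance of $\|\nabla\chi_R\|_{L^3}$ is what makes this work in dimension three.
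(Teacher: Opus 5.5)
Your proposal is correct and follows the paper's proof essentially step for step. Both use the cutoff $\chi_R$ and the product rule, bound the commutator term by H\"older's inequality as $\|\nabla\chi_R\times u\|_2\le C\|u\|_{L^6(A_R)}$ (the constant is uniform in $R$ because $\|\nabla\chi_R\|_{L^3}$ is scale-invariant), and then finish with mollification, which commutes with $\curl$, followed by a diagonal choice.
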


\begin{proof}
Choose \(\chi\in C_c^\infty(B_2)\) with \(0\leq\chi\leq1\) and
\(\chi=1\) on \(B_1\). Set \(\chi_R(x)=\chi(x/R)\) and
\(A_R=B_{2R}\setminus B_R\). For \(u\in W^{6,2}(\curl;\R^3)\),
\[
 \curl(\chi_Ru)=\chi_R\curl u+\nabla\chi_R\times u.
\]
By H\"older's inequality and \(|A_R|\leq CR^3\),
\[
 \norm{\nabla\chi_R\times u}_2
 \leq \frac{C}{R}|A_R|^{1/3}\norm u_{L^6(A_R)}
 \leq C\norm u_{L^6(A_R)}\longrightarrow0.
\]
The remaining field and curl tails tend to zero in \(L^6\) and
\(L^2\), respectively. Thus \(\chi_Ru\to u\) in the graph norm.
For fixed \(R\), mollification commutes with curl and converges
in both norms. A diagonal choice gives the required compactly
supported smooth approximations.
\end{proof}

\subsection{Nonlinear Hodge decomposition and reconstruction}\label{sec:hodge-system}

For \(u\in W^{6,2}(\curl;\R^3)\), apply Lemma~\ref{lem:helmholtz-lp} with \(p=6\)
and write
\[
    u=v+g=v+\nabla\varphi,
    \qquad \dive v=0,
    \qquad g\in\G_{6}.
\]

\Needspace{7\baselineskip}
For a solution of \eqref{eq:main}, the two components satisfy
the following system.
\begin{proposition}\label{prop:coupled}
Let \(u\in W^{6,2}(\curl;\R^3)\) be a weak solution of \eqref{eq:main}, and let
\(u=v+\nabla\varphi\) be the decomposition \eqref{eq:hodge}.  Then
\begin{align}
    &-\Delta v=\abs{u}^{4}u,
        \label{eq:poisson-v}\\
    &\dive(\abs{u}^{4}u)=0,
        \label{eq:divF}\\
    &\dive\!\left(
       \abs{v+\nabla\varphi}^{4}(v+\nabla\varphi)
    \right)=0
        \label{eq:gauge}
\end{align}
in the sense of distributions.  Moreover,
\begin{equation}\label{eq:local-reg}
    v\in W_{\mathrm{loc}}^{2,6/5}(\R^{3},\R^{3}),
    \qquad
    \curl  u=\curl  v
    \in W_{\mathrm{loc}}^{1,6/5}(\R^{3},\R^{3}).
\end{equation}
\end{proposition}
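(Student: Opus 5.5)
The plan is to test the weak formulation \eqref{eq:weak} with suitable test fields and combine it with Lemma~\ref{lem:curl-curl-identity}. First set $F=\abs{u}^4u$. Since $u\in L^6$, we have $\abs{F}=\abs{u}^5\in L^{6/5}$, so $F\in L^{6/5}\subset L^1_{\mathrm{loc}}$ and all distributional statements make sense.

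For \eqref{eq:divF}, take $\phi=\nabla\psi$ with $\psi\in C_c^\infty(\R^3)$ in \eqref{eq:weak}. The left side vanishes because $\curl\nabla\psi=0$, which leaves $\int F\cdot\nabla\psi\,dx=0$ for all $\psi$, that is, $\dive F=0$ in $\mathcal D'$. Identity \eqref{eq:gauge} is the same statement, since $u=v+\nabla\varphi$ by Lemma~\ref{lem:helmholtz-lp}.

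For \eqref{eq:poisson-v}, the weak formulation says $\curl(\curl u)=F$ in distributions: for $\phi\in C_c^\infty$,
\[
 \langle\curl\curl u,\phi\rangle=\int u\cdot\curl\curl\phi\,dx=\int\curl u\cdot\curl\phi\,dx,
\]
where the last step uses the distributional definition of $\curl u\in L^2$. By Lemma~\ref{lem:curl-curl-identity}, with $\dive v=0$ and $\curl g=0$ (both from Lemma~\ref{lem:helmholtz-lp}), we get $\curl\curl u=-\Delta v$. Hence $-\Delta v=F$.

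For \eqref{eq:local-reg}, note that $v=Pu\in L^6\subset L^{6/5}_{\mathrm{loc}}$ and $F\in L^{6/5}$. Lemma~\ref{lem:local-poisson} with $p=6/5$ then gives $v\in W^{2,6/5}(B_r)$ for every ball, so $v\in W^{2,6/5}_{\mathrm{loc}}$. Since $\curl g=0$, we have $\curl u=\curl v$, and first derivatives of a $W^{2,6/5}_{\mathrm{loc}}$ field lie in $W^{1,6/5}_{\mathrm{loc}}$.

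The only subtle point is justifying the use of Lemma~\ref{lem:local-poisson} with $w=v$ merely in $L^{6/5}_{\mathrm{loc}}$ and the equation in distributions. The interior Calder\'on--Zygmund estimate does apply in this setting: one can mollify $v$ and $F$, apply the estimate to the smooth approximants, and pass to the limit, because the bound is uniform. Everything else is direct testing.
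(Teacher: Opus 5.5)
Your proposal is correct and follows the paper's proof essentially step for step: you test \eqref{eq:weak} with $\nabla\psi$ to get the divergence constraint, use Lemma~\ref{lem:curl-curl-identity} with the Helmholtz decomposition to obtain $-\Delta v=\abs{u}^4u$, and apply Lemma~\ref{lem:local-poisson} with $p=6/5$ for the local regularity. Your mollification remark spells out the passage to distributional solutions that the paper builds into the statement of Lemma~\ref{lem:local-poisson}, and your check that \eqref{eq:weak} means $\curl(\curl u)=F$ in $\mathcal D'$ makes explicit a step the paper leaves implicit.
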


\begin{proof}
Set \(F:=\abs{u}^{4}u\).  Since \(u\in L^{6}\),
\begin{equation}\label{eq:F-integrability}
    \int_{\R^{3}}\abs{F}^{6/5}\dd x
    =\int_{\R^{3}}\abs{u}^{6}\dd x<\infty,
    \qquad
    \norm{F}_{6/5}=\norm{u}_{6}^{5}.
\end{equation}
Thus every pairing below is well defined.

First take \(\phi=\nabla\psi\) in \eqref{eq:weak}, where
\(\psi\in C_{c}^{\infty}(\R^{3})\).  This is an admissible vector-valued
test function, and
\(\curl(\nabla\psi)=0\).  Hence the left-hand side of
\eqref{eq:weak} is zero and
\[
    \int_{\R^{3}}F\cdot\nabla\psi\dd x=0
    \qquad(\psi\in C_{c}^{\infty}(\R^{3})).
\]
By the definition
\(\langle\dive F,\psi\rangle=-\int F\cdot\nabla\psi\), this is precisely
\(\dive F=0\), proving \eqref{eq:divF}.

Next, Lemma~\ref{lem:helmholtz-lp} gives
\(u=v+\nabla\varphi\), \(\dive v=0\), and
\(\curl \nabla\varphi=0\).  Lemma~\ref{lem:curl-curl-identity}
therefore yields
\begin{equation}\label{eq:curlcurl-to-poisson}
    \curl(\curl  u)=-\Delta v
    \qquad\text{in }\mathcal D'(\R^{3},\R^{3}).
\end{equation}
Comparing \eqref{eq:curlcurl-to-poisson} with the equation
\(\curl(\curl  u)=F\) proves \eqref{eq:poisson-v}.
Substituting \(u=v+\nabla\varphi\) into \(\dive F=0\) gives
\eqref{eq:gauge}. No condition \(\dive u=0\) has been used.

Finally, \(v\in L^{6}\) implies \(v\in L^{6/5}_{\mathrm{loc}}\) by
H\"older's inequality on bounded balls:
\begin{equation*}
 \|v\|_{L^{6/5}(B_R)}
 \leq |B_R|^{2/3}\|v\|_{L^6(B_R)}.
\end{equation*}
Hence \eqref{eq:F-integrability}, \eqref{eq:poisson-v}, and
Lemma~\ref{lem:local-poisson} imply
\begin{equation*}
 v\in W_{\mathrm{loc}}^{2,6/5}(\R^3;\R^3).
\end{equation*}
Furthermore,
\begin{equation*}
 \curl u=\curl v,
 \qquad
 \partial_\ell(\curl v)_i
 =\eps_{ijk}\partial_\ell\partial_jv_k
 \in L_{\mathrm{loc}}^{6/5},
\end{equation*}
and therefore
\begin{equation*}
 \curl u\in W_{\mathrm{loc}}^{1,6/5}(\R^3;\R^3).
\end{equation*}
This proves \eqref{eq:local-reg}.
\end{proof}

We next derive a representation of the solution. Recall the
Newtonian kernel
\begin{equation*}
    N(x):=\frac{1}{4\pi\abs{x}}.
\end{equation*}

For \(v\in L^{6}(\R^{3},\R^{3})\), let \(\Gamma(v)\in\G_{6}\) denote the
minimizer of
\begin{equation}\label{eq:gauge-min}
    g\longmapsto \int_{\R^{3}}\abs{v+g}^{6}\dd x,
    \qquad g\in\G_{6}.
\end{equation}

\Needspace{7\baselineskip}
The existence and uniqueness of this correction follow from
Lemma~\ref{lem:strict-convex-projection}.
\begin{lemma}\label{lem:Gamma}
For every \(v\in L^{6}\), the minimizer \(\Gamma(v)\) in
\eqref{eq:gauge-min} exists and is unique.  It is characterized by
\begin{equation}\label{eq:Gamma-Euler}
    \int_{\R^{3}}
       \abs{v+\Gamma(v)}^{4}(v+\Gamma(v))\cdot h\dd x=0
    \qquad\text{for every }h\in\G_{6}.
\end{equation}
It also satisfies
\begin{equation*}
    \norm{v+\Gamma(v)}_{6}\leq\norm{v}_{6},
    \qquad
    \norm{\Gamma(v)}_{6}\leq2\norm{v}_{6}.
\end{equation*}
\end{lemma}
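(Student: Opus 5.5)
The plan is to apply Lemma~\ref{lem:strict-convex-projection} directly, with $E=\R^3$, $m=3$, exponent $6$ in place of $p$, $Z=\G_6$ and $q=v\in L^6(\R^3,\R^3)$. The one hypothesis that needs checking is that $Z$ is a closed linear subspace of $L^6(\R^3,\R^3)$.

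\textbf{Linearity and closedness.} The space $\G_6$ is linear because the distributional curl is linear. It is closed in $L^6$ by Lemma~\ref{lem:G6-closed}. Inside $L^6$ the defining condition is just $\curl g=0$, since $\curl g=0\in L^2$ automatically. So the condition $g\in W^{6,2}(\curl;\R^3)$ imposes nothing extra, and $\G_6=\{g\in L^6:\curl g=0\}$. This identification is what makes the closedness argument of Lemma~\ref{lem:G6-closed} apply verbatim.

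\textbf{Reading off the conclusions.} Lemma~\ref{lem:strict-convex-projection} then gives the following.
\begin{enumerate}[label=\textup{(\alph*)}]
\item There is a unique minimizer $\Gamma(v)\in\G_6$ of \eqref{eq:gauge-min}. Uniqueness comes from strict convexity of $z\mapsto\|v+z\|_6^6$.
\item The minimizer is characterized by the Euler identity \eqref{eq:abstract-Euler}, with $|q+z_q|^{p-2}$ becoming $|v+\Gamma(v)|^{4}$. This is exactly \eqref{eq:Gamma-Euler}. The pairing is finite by H\"older's inequality, since $|v+\Gamma(v)|^4(v+\Gamma(v))\in L^{6/5}$ and $h\in L^6$.
\item The bound $\|v+\Gamma(v)\|_6\le\|v\|_6$ follows by comparing with $g=0\in\G_6$. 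The bound $\|\Gamma(v)\|_6\le 2\|v\|_6$ then follows from the triangle inequality.
\end{enumerate}

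I expect no real obstacle here. The only point needing care is the identification of $\G_6$ with the curl-free subspace of $L^6$ described above.
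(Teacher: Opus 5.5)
Your proposal is correct and follows the paper's proof: apply Lemma~\ref{lem:strict-convex-projection} with $E=\R^3$, exponent $6$, $Z=\G_6$ and $q=v$, with closedness supplied by Lemma~\ref{lem:G6-closed}. Your remark that $\G_6$ is simply the curl-free subspace of $L^6$ is accurate. It is harmless, though already implicit in Lemma~\ref{lem:G6-closed}.
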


\begin{proof}
By Lemma~\ref{lem:G6-closed}, $\G_6$ is a closed linear subspace
of $L^6$. Apply Lemma~\ref{lem:strict-convex-projection} with
$E=\R^3$, $p=6$, $Z=\G_6$, and $q=v$. It gives the unique
minimizer, the Euler identity \eqref{eq:Gamma-Euler}, and both
norm estimates.
\end{proof}

\Needspace{7\baselineskip}
We can now express the solution in terms of its nonlinear source
and the correction $\Gamma$.
\begin{proposition}\label{thm:reconstruction}
Let \(u\in W^{6,2}(\curl;\R^3)\) solve \eqref{eq:main} and set
\begin{equation*}
    F:=\abs{u}^{4}u.
\end{equation*}
Then its solenoidal component is
\begin{equation}\label{eq:v-NF}
    v=N*F=(-\Delta)^{-1}F
    \qquad\text{in }L^{6}(\R^{3},\R^{3}),
\end{equation}
and
\begin{equation}\label{eq:curl-NF}
    \curl  u=\curl  v=\curl(N*F)
    \qquad\text{in }L^{2}(\R^{3},\R^{3}).
\end{equation}
Furthermore,
\begin{equation}\label{eq:HLS-est}
    \norm{v}_{6}\leq C\norm{F}_{6/5},
    \qquad
    \norm{\curl  u}_{2}\leq C\norm{F}_{6/5},
\end{equation}
and the full field satisfies the exact implicit formula
\begin{equation}\label{eq:full-reconstruction}
    u=N*F+\Gamma(N*F),
    \qquad
    F=\abs{u}^{4}u.
\end{equation}
\end{proposition}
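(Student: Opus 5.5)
The plan is to identify $v$ with $V:=N*F$ via the uniqueness results already proved. Then I will recover $g$ as $\Gamma(v)$ from the Euler characterization in Lemma~\ref{lem:Gamma}.

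\textbf{Step 1: $v=N*F$.} By \eqref{eq:F-integrability}, $F\in L^{6/5}$. Lemma~\ref{lem:newton-estimates} then gives $V=N*F\in L^6$ with $\nabla V\in L^2$, $-\Delta V=F$, and the bounds $\|V\|_6+\|\nabla V\|_2\le C\|F\|_{6/5}$. By \eqref{eq:poisson-v} the solenoidal component satisfies $-\Delta v=F$ as well. Hence $w:=v-V\in L^6$ is harmonic in $\mathcal D'$, and Lemma~\ref{lem:harmonic-Lp} yields $w=0$. This proves \eqref{eq:v-NF}. Since $\curl g=0$, we get $\curl u=\curl v=\curl V$ distributionally. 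Lemma~\ref{lem:newton-estimates} shows $\curl V\in L^2$, which gives \eqref{eq:curl-NF} and both estimates in \eqref{eq:HLS-est}.

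\textbf{Step 2: $g=\Gamma(v)$.} By Lemma~\ref{lem:helmholtz-lp}, $g\in\G_6$, so it suffices to verify the Euler identity \eqref{eq:Gamma-Euler} with $v+g=u$, namely
\[
 \int_{\R^3}F\cdot h\,\dd x=0\qquad(h\in\G_6).
\]
Uniqueness in Lemma~\ref{lem:Gamma} then forces $g=\Gamma(v)=\Gamma(N*F)$, which is \eqref{eq:full-reconstruction}. To prove the identity, I will extend the weak formulation \eqref{eq:weak} to test fields $\phi\in W^{6,2}(\curl;\R^3)$ using Lemma~\ref{lem:graph-density}. Both sides of \eqref{eq:weak} are continuous in the graph norm: the left side because $\curl u\in L^2$, and the right side because $F\in L^{6/5}=(L^6)'$. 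Taking $\phi=h\in\G_6$, the left side vanishes since $\curl h=0$, and the identity follows.

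\textbf{Main obstacle.} The delicate point is Step 2. The identity $\dive F=0$ in \eqref{eq:divF} only tests $F$ against gradients of $C_c^\infty$ functions. A general $h\in\G_6$ is $\nabla\varphi$ with $\varphi$ only in $W^{1,6}_{\mathrm{loc}}$, so it is not directly such a gradient. The density lemma resolves this: it approximates $h$ in the graph norm by smooth compactly supported fields $h_n$. These need not be curl-free, but $\curl h_n\to0$ in $L^2$, so the left side of \eqref{eq:weak} still tends to $0$. This is the step I would check most carefully.
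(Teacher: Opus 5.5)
Your proposal is correct and matches the paper's proof essentially step for step. You identify $v$ with $N*F$ by applying Lemma~\ref{lem:harmonic-Lp} to the $L^6$ harmonic difference $v-N*F$, and you obtain $g=\Gamma(N*F)$ by extending the weak formulation to $h\in\G_6$ through the graph-norm density of Lemma~\ref{lem:graph-density}, using $\|\curl u\|_2\|\curl h_n\|_2\to0$ and $\|F\|_{6/5}\|h_n-h\|_6\to0$, and then invoking the uniqueness in Lemma~\ref{lem:Gamma}.
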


\begin{proof}
Because \(u\in L^{6}\), identity \eqref{eq:F-integrability} gives
\(F\in L^{6/5}\).  Lemma~\ref{lem:newton-estimates} therefore applies to
\[
    V:=N*F
\]
and shows that
\begin{equation*}
    V\in L^{6},
    \qquad -\Delta V=F,
    \qquad
    \norm{V}_{6}+\norm{\nabla V}_{2}
       \leq C\norm{F}_{6/5}.
\end{equation*}
On the other hand, Proposition~\ref{prop:coupled} gives
\(-\Delta v=F\), and Lemma~\ref{lem:helmholtz-lp} gives \(v\in L^{6}\).
Thus
\[
    w:=v-V\in L^{6}(\R^{3},\R^{3}),
    \qquad \Delta w=0
\]
in distributions.  Lemma~\ref{lem:harmonic-Lp}, with \(p=6\), yields
\(w=0\).  Hence
\[
    v=V=N*F,
\]
which proves \eqref{eq:v-NF} and the first estimate in
\eqref{eq:HLS-est}.

The Helmholtz decomposition gives
\(u=v+g\) with \(\curl  g=0\).  Consequently
\[
    \curl  u=\curl  v
    =\curl(N*F).
\]
The last estimate in Lemma~\ref{lem:newton-estimates} proves both
\eqref{eq:curl-NF} and the second inequality in \eqref{eq:HLS-est}.

It remains to identify \(g\).  Fix \(h\in\G_{6}\).  By
Lemma~\ref{lem:graph-density}, there is a sequence
\(\{h_{n}\}\subset C_{c}^{\infty}(\R^{3},\R^{3})\) such that
\begin{equation*}
    h_{n}\to h\quad\text{in }L^{6},
    \qquad
    \curl  h_{n}\to\curl  h=0
       \quad\text{in }L^{2}.
\end{equation*}
Using \(h_{n}\) in the weak equation gives
\begin{equation*}
 \int_{\R^{3}}F\cdot h_{n}\,\dd x
 =\int_{\R^{3}}(\curl u)\cdot(\curl h_{n})\,\dd x.
\end{equation*}
The two limits are quantified by
\begin{align*}
 \left|
 \int_{\R^3}(\curl u)\cdot(\curl h_n)\,\dd x
 \right|
 &\leq\|\curl u\|_2\|\curl h_n\|_2
 \longrightarrow0,\\
 \left|
 \int_{\R^3}F\cdot(h_n-h)\,\dd x
 \right|
 &\leq\|F\|_{6/5}\|h_n-h\|_6
 \longrightarrow0.
\end{align*}
Therefore
\begin{equation}\label{eq:F-orthogonal-G6}
    \int_{\R^{3}}F\cdot h\dd x=0
    \qquad(h\in\G_{6}).
\end{equation}
Since \(F=\abs{u}^{4}u=\abs{v+g}^{4}(v+g)\), equation
\eqref{eq:F-orthogonal-G6} is exactly the Euler equation
\eqref{eq:Gamma-Euler} with \(g\) in place of \(\Gamma(v)\).
Lemma~\ref{lem:Gamma} gives uniqueness, so
\[
    g=\Gamma(v)=\Gamma(N*F).
\]
Substituting this identity into \(u=v+g\) proves
\eqref{eq:full-reconstruction}.
\end{proof}

\begin{remark}\label{rem:implicit}
Formula \eqref{eq:full-reconstruction} reduces the reconstruction
of a solution to the Newtonian potential and the correction $\Gamma$.
The source $F$ must satisfy
\[
    F
    =\abs{N*F+\Gamma(N*F)}^{4}
      \bigl(N*F+\Gamma(N*F)\bigr).
\]
Thus the representation is implicit. Ground states correspond to
the least-energy nonzero solutions of this relation and, after
curl-free correction and amplitude normalization, to optimizers of
the sharp Sobolev--curl inequality. We refer to
\cite[Theorem~1.2(b)]{MederskiSzulkin2021}.
\end{remark}

\subsection{Excluded ansatzes}\label{sec:excluded-ansatzes}

We next consider two possible forms of solutions. Since the
Aubin--Talenti functions are radial about their centers, we first
ask whether a constant vector multiple of a radial function can
solve \eqref{eq:main}.

\Needspace{7\baselineskip}

\begin{proposition}
\label{prop:no-talenti}
Let \(a\in\R^{3}\setminus\{0\}\), \(y_{0}\in\R^{3}\), and let
\(U\in C^{1}((0,\infty))\) be nonconstant.  Then
\begin{equation}\label{eq:polarized}
    u(x)=a\,U(\abs{x-y_{0}})
\end{equation}
cannot be a weak solution of \eqref{eq:main}.  In particular, no nonzero
constant vector multiple of an Aubin--Talenti function is a curl--curl ground
state.
\end{proposition}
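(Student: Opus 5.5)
We will use only the divergence constraint \eqref{eq:divF} from Proposition~\ref{prop:coupled}, which holds for every weak solution in $W^{6,2}(\curl;\R^3)$. Suppose, for contradiction, that $u(x)=a\,U(\abs{x-y_0})$ is a weak solution. Then
\[
 F=\abs{u}^4u=\abs{a}^4a\,G(\abs{x-y_0}),\qquad G(t):=\abs{U(t)}^4U(t).
\]
Since $t\mapsto\abs{t}^4t$ is $C^1$ with derivative $5\abs{t}^4$, and $U\in C^1((0,\infty))$, we get $G\in C^1((0,\infty))$ and $G'=5\abs{U}^4U'$. Hence $F$ is $C^1$ on $\R^3\setminus\{y_0\}$.

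On this punctured space the distributional divergence of $F$ coincides with the classical one. To see this, test \eqref{eq:divF} with $\psi\in C_c^\infty(\R^3\setminus\{y_0\})$ and integrate by parts. Writing $r=\abs{x-y_0}$, this gives
\[
 \dive F(x)=\abs{a}^4\,G'(r)\,\frac{a\cdot(x-y_0)}{r}=0
 \qquad\text{for all }x\neq y_0 .
\]

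Fix any $r_0>0$. The sphere $\{\abs{x-y_0}=r_0\}$ contains points with $a\cdot(x-y_0)\neq0$, for instance $x=y_0+r_0a/\abs{a}$. At such a point the previous identity forces $G'(r_0)=0$. Hence $G'\equiv0$ on the connected interval $(0,\infty)$, so $G$ is constant there. The map $t\mapsto\abs{t}^4t$ is strictly increasing and therefore injective, so $U$ itself is constant on $(0,\infty)$. This contradicts the assumption that $U$ is nonconstant.

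For the final assertion, every Aubin--Talenti profile \eqref{eq:aubin-talenti} has the form $U_{\eps,y}(x)=U(\abs{x-y})$ with $U(t)=3^{1/4}\bigl(\eps/(\eps^2+t^2)\bigr)^{1/2}$. This $U$ is $C^1$ on $(0,\infty)$ and strictly decreasing, hence nonconstant. So $aU_{\eps,y}$ is not even a weak solution, and in particular not a ground state.

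The only delicate point is justifying the passage from the distributional identity to the pointwise one. This is handled by restricting test functions to the punctured space, where $F$ is $C^1$. No information at $y_0$ is needed, because the conclusion concerns $G$ only on $(0,\infty)$.
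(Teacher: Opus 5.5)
Your proof is correct and follows the paper's argument. Both use only the constraint $\dive(\abs{u}^4u)=0$, compute the classical divergence $\abs{a}^4G'(r)\,a\cdot(x-y_0)/r$ on $\R^3\setminus\{y_0\}$, evaluate it at $y_0+r_0a/\abs{a}$, and invoke injectivity of $s\mapsto\abs{s}^4s$. You run the argument contrapositively (forcing $G'\equiv0$, hence $U$ constant) instead of choosing $r_0$ with $G'(r_0)\neq0$, and you justify the passage from the distributional to the pointwise identity a bit more explicitly.
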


\begin{proof}
Every weak solution satisfies \eqref{eq:divF}.  For the field
\eqref{eq:polarized}, set \(r=\abs{x-y_{0}}\) and
\(G(r):=\abs{U(r)}^{4}U(r)\).  Since
\[
    \abs{u}^{4}u
    =\abs{a}^{4}a\,G(r),
    \qquad
    \nabla r=\frac{x-y_{0}}{\abs{x-y_{0}}},
\]
the chain rule on \(\R^{3}\setminus\{y_{0}\}\) gives
\[
    \dive(\abs{u}^{4}u)
    =\abs{a}^{4}G'(\abs{x-y_{0}})
       \frac{a\cdot(x-y_{0})}{\abs{x-y_{0}}}.
\]
The scalar map \(s\mapsto\abs{s}^{4}s\) is strictly increasing.  Hence the
nonconstant function \(U\) makes \(G\) nonconstant.  Because \(G\in C^{1}\),
the mean-value theorem yields a radius \(r_{0}>0\) with
\(G'(r_{0})\neq0\).  At the point
\(x_{0}=y_{0}+r_{0}a/\abs a\), the displayed divergence equals
\[
    \abs a^{5}G'(r_{0})\neq0.
\]
It is therefore nonzero on a neighborhood of \(x_{0}\), contradicting the
distributional identity \(\dive(\abs u^{4}u)=0\).
\end{proof}

\Needspace{7\baselineskip}
For the second exclusion, we use the following fact about the
Laplacian on the whole space.
\begin{lemma}\label{lem:free-helmholtz-L2}
Let \(\lambda>0\).  If \(w\in L^{2}(\R^{3},\R^{m})\) satisfies
\begin{equation}\label{eq:free-helmholtz-lemma}
    -\Delta w=\lambda w
\end{equation}
in distributions, then \(w=0\).
\end{lemma}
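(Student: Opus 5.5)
The natural route is the Fourier transform. Since $w\in L^2(\R^3,\R^m)$, each component $w_i$ is a tempered distribution, and $\widehat{w_i}\in L^2(\R^3)$ by Plancherel. Taking the Fourier transform of \eqref{eq:free-helmholtz-lemma} componentwise gives
\[
 (|\xi|^2-\lambda)\,\widehat{w_i}(\xi)=0
 \qquad\text{in }\mathcal S'(\R^3).
\]
Because $\widehat{w_i}$ is an $L^2$ function, this is an identity between locally integrable functions. Multiplication by the polynomial $|\xi|^2-\lambda$ is the multiplication of an $L^1_{\mathrm{loc}}$ function by a smooth one, so the identity holds pointwise almost everywhere. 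Hence $\widehat{w_i}=0$ almost everywhere off the sphere $\{|\xi|=\sqrt\lambda\}$.

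That sphere has Lebesgue measure zero. It follows that $\widehat{w_i}=0$ as an element of $L^2$, and Plancherel then gives $w_i=0$ for every $i$. This proves the lemma.

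The one point that needs care is the step from a distributional identity to an almost-everywhere identity. In general, a distribution annihilated by $|\xi|^2-\lambda$ may be supported on the sphere, for example a surface measure, and this is exactly how plane-wave solutions of the Helmholtz equation arise. Such distributions are excluded here only because $\widehat{w_i}$ is a genuine $L^2$ function. No $L^2$ function can be concentrated on a null set, so the support on the sphere forces $\widehat{w_i}=0$.

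A smoothness-based argument, for instance via Weyl-type regularity and Rellich's lemma, would also work but is heavier. The Plancherel argument is self-contained and is the one I would write out.
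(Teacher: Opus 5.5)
Your proof is correct and follows the same Fourier--Plancherel route as the paper: you reduce to $(|\xi|^2-\lambda)\widehat{w_i}=0$ and conclude that the $L^2$ function $\widehat{w_i}$ must vanish off the null sphere $\{|\xi|=\sqrt\lambda\}$. The only difference is cosmetic. You treat the distributional product as the locally integrable function $(|\xi|^2-\lambda)\widehat{w_i}$ and apply the fundamental lemma directly, whereas the paper divides test functions by $|\xi|^2-\lambda$ on open sets away from the sphere and then covers the complement of the sphere by countably many such sets.
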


\begin{proof}
We argue componentwise and use the Fourier convention
\begin{equation*}
 \widehat f(\xi)=(2\pi)^{-3/2}
 \int_{\R^3}f(x)e^{-ix\cdot\xi}\dd x.
\end{equation*}
For a smooth rapidly decreasing function $f$, integration by parts
in the $x_j$ variable gives
\begin{align*}
 \widehat{\partial_j f}(\xi)
 &=(2\pi)^{-3/2}\int_{\R^3}\partial_j f(x)e^{-ix\cdot\xi}\dd x\\
 &=-(2\pi)^{-3/2}\int_{\R^3}f(x)\partial_j(e^{-ix\cdot\xi})\dd x\\
 &=i\xi_j\widehat f(\xi).
\end{align*}
The boundary term vanishes by rapid decay. Applying this formula
twice yields
\begin{equation*}
 \widehat{\partial_j^2 f}(\xi)
 =(i\xi_j)^2\widehat f(\xi)
 =-\xi_j^2\widehat f(\xi).
\end{equation*}
Summing over the three coordinates, we obtain
\begin{equation*}
 \widehat{-\Delta f}(\xi)
 =(\xi_1^2+\xi_2^2+\xi_3^2)\widehat f(\xi)
 =|\xi|^2\widehat f(\xi).
\end{equation*}

The same identity holds for tempered distributions, in particular for
$w\in L^2$. The weak formulation of \eqref{eq:free-helmholtz-lemma}
extends to Schwartz test functions by a standard cutoff argument.
Taking the Fourier transform therefore gives
\begin{equation*}
 |\xi|^2\widehat w=\lambda\widehat w,
 \qquad\text{hence}\qquad
 (|\xi|^2-\lambda)\widehat w=0
\end{equation*}
in distributions. Plancherel's theorem gives $\widehat w\in L^2$.

If an open set \(O\) has closure disjoint from the sphere
\(S_{\sqrt\lambda}=\{\abs\xi=\sqrt\lambda\}\), then
\((\abs\xi^{2}-\lambda)^{-1}\in C^\infty(O)\).  For every
$\phi\in C_c^\infty(O)$, put
\begin{equation*}
 \psi(\xi):=\frac{\phi(\xi)}{|\xi|^2-\lambda}
 \in C_c^\infty(O).
\end{equation*}
Then
\begin{equation*}
 \langle\widehat w,\phi\rangle
 =\langle(|\xi|^2-\lambda)\widehat w,\psi\rangle
 =0.
\end{equation*}
Thus $\widehat w$ is the zero distribution on every such $O$.
Because $\widehat w\in L^2$, it vanishes almost everywhere there.
Covering the complement of the sphere by countably many such open
sets gives
\begin{equation*}
 \operatorname{ess\,supp}\widehat w
 \subset S_{\sqrt\lambda}.
\end{equation*}
Since $|S_{\sqrt\lambda}|=0$ and $\widehat w\in L^2$,
\begin{equation*}
 \|\widehat w\|_2^2
 =\int_{S_{\sqrt\lambda}}|\widehat w|^2\,\dd\xi
 =0.
\end{equation*}
Plancherel's theorem now gives $w=0$.
\end{proof}

\Needspace{7\baselineskip}
We now exclude solutions satisfying a constant-coefficient
Beltrami relation.
\begin{proposition}
\label{prop:no-beltrami}
If \(\kappa\neq0\), there is no nonzero field
\(u\in W^{6,2}(\curl;\R^3)\) satisfying
\begin{equation}\label{eq:beltrami}
    \curl  u=\kappa u
\end{equation}
on \(\R^{3}\).  If \(\kappa=0\), no nonzero weak solution of
\eqref{eq:main} can satisfy \eqref{eq:beltrami}.  Consequently, a nonzero
finite-energy solution of \eqref{eq:main} cannot satisfy a
constant-coefficient Beltrami relation for any \(\kappa\in\R\).
\end{proposition}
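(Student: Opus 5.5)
The proof splits into the cases $\kappa\neq0$ and $\kappa=0$. The case $\kappa\neq0$ uses only the Beltrami relation and Lemma~\ref{lem:free-helmholtz-L2}; the case $\kappa=0$ uses the weak equation and the orthogonality \eqref{eq:F-orthogonal-G6}.

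\emph{Case $\kappa\neq0$.} Let $u\in W^{6,2}(\curl;\R^3)$ satisfy $\curl u=\kappa u$. Then $u=\kappa^{-1}\curl u\in L^2$, because $\curl u\in L^2$. Taking the divergence of the relation gives $\kappa\dive u=\dive\curl u=0$ in distributions, so $\dive u=0$. Applying curl once more gives $\curl\curl u=\kappa\curl u=\kappa^2u$ in $\mathcal D'$. By Lemma~\ref{lem:curl-curl-identity} and $\dive u=0$, this reads $-\Delta u=\kappa^2 u$. Since $u\in L^2(\R^3,\R^3)$ and $\lambda=\kappa^2>0$, Lemma~\ref{lem:free-helmholtz-L2} yields $u=0$. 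All identities are distributional, so no regularity beyond $u\in L^2$ is needed. This is the step where some care is required, namely the observation that $u\in L^2$ follows from the relation itself, so that the $L^2$ Liouville-type lemma applies.

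\emph{Case $\kappa=0$.} Now $\curl u=0$, so $u\in\G_6$. Let $u$ be a weak solution of \eqref{eq:main}. The argument proving \eqref{eq:F-orthogonal-G6} in Proposition~\ref{thm:reconstruction} shows that $\int F\cdot h\,\dd x=0$ for every $h\in\G_6$, where $F=|u|^4u$. That argument uses only the weak equation together with Lemma~\ref{lem:graph-density}. Alternatively, test \eqref{eq:weak} with approximations $h_n\to u$ from Lemma~\ref{lem:graph-density}; since $\curl u=0$, the left side tends to $0$. Choosing $h=u$, which is admissible because $F\in L^{6/5}$ and $u\in L^6$, gives
\[
 \int_{\R^3}|u|^6\,\dd x=0,
\]
hence $u=0$.

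Combining the two cases gives the final assertion: a nonzero finite-energy solution cannot satisfy $\curl u=\kappa u$ for any $\kappa\in\R$. For $\kappa\neq0$ no nonzero field in $W^{6,2}(\curl;\R^3)$ does so at all, and for $\kappa=0$ the weak equation forces $u=0$.
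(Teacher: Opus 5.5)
Your proof is correct. The case $\kappa\neq0$ is exactly the paper's argument: $u=\kappa^{-1}\curl u\in L^2$, then $\dive u=0$, then $-\Delta u=\kappa^2u$, then Lemma~\ref{lem:free-helmholtz-L2}.

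For $\kappa=0$ the paper argues more directly than your route through \eqref{eq:F-orthogonal-G6} and Lemma~\ref{lem:graph-density}. Since $\curl u=0$, the left side of \eqref{eq:weak} vanishes for every $\phi\in C_c^\infty(\R^3,\R^3)$. Hence $|u|^4u=0$ in $\mathcal D'$, so almost everywhere, and $u=0$, with no density argument and no testing against $u$ itself. Your version is valid, but it needs more machinery, and it deliberately discards most of the information in the weak equation, keeping only that $|u|^4u$ annihilates $\G_6$.
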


\begin{proof}
If \(\kappa=0\) and \(u\) also solves \eqref{eq:main}, then
the left-hand side of \eqref{eq:main} is zero.  Hence
\(\abs{u}^{4}u=0\) in distributions and therefore almost everywhere, so
\(u=0\).

Suppose \(\kappa\neq0\).  Since \(\curl  u\in L^{2}\), relation
\eqref{eq:beltrami} implies
\[
    \norm{u}_{2}=\abs\kappa^{-1}\norm{\curl  u}_{2}<\infty.
\]
Taking distributional divergence of \eqref{eq:beltrami} and using
\(\dive(\curl  u)=0\) gives
\(\kappa\dive u=0\), hence \(\dive u=0\).  Taking one more curl and
using Lemma~\ref{lem:curl-curl-identity} yields
\begin{equation*}
    -\Delta u=\kappa^{2}u
\end{equation*}
in distributions.  By Plancherel's theorem,
Lemma~\ref{lem:free-helmholtz-L2} with \(\lambda=\kappa^{2}\) gives
\(u=0\).
\end{proof}

\begin{proof}[Proof of Theorem~\ref{thm:main-conclusions}]
Part~(i) follows from Lemma~\ref{lem:helmholtz-lp} and
Proposition~\ref{thm:reconstruction}. The exclusions
in part~(ii) are proved in Propositions~\ref{prop:no-talenti}
and~\ref{prop:no-beltrami}.
\end{proof}

\

{\bf Acknowledgments:} The research of Yong Liu was supported by the National Natural Science Foundation of China (Grant No. 12471204). The research of J. Wang is partially supported
by National Key RD Program of China 2022YFA1005601 and National Natural Science
Foundation of China 12371114. The research of Wen Yang
was supported by the National Key Research and Development Program of China
(Grant No. 2022YFA1006800), the National Natural Science Foundation of China
(Grant No. 12531010), the Science and Technology Development Fund of the
Macao SAR (FDCT, Grant No. 0070/2024/RIA1), the Multi-Year Research Grants
of the University of Macau (Grant Nos. MYRG-GRG2025-00051-FST and MYRGGRG2026-00034-FSC), and the University of Macau Development Foundation
(Grant No. TISF/2025/006/FST).The authors acknowledge the use of AI tools. 

\

\end{document}